\newtheorem{teo}{Theorem}[section]
\newtheorem{teorema}{Theorem}[section]
\newtheorem{proposizione}[teorema]{Proposition}
\newtheorem{lemma}[teorema]{Lemma}
\newtheorem{cor}[teorema]{Corollary}
\newcommand{\I}{\mathbh{1}}
\newcommand{\OmNless}{\Omega_N}
\newcommand{\OmN}{\overline{\Omega}_N}
\newcommand{\Dphi}{{\widetilde{\phi}}}
\newcommand{\CNS}{2.4}
\newcommand{\PRM}{2.7}
\newcommand{\Cor}{3.28}
\begin{document}
\begin{frontmatter}

\title{On the role of Allee effect and mass migration in survival and extinction of a species\thanksref{T1,T2}}
\thankstext{T1}{Supported by Fondation Sciences Math\'ematiques de Paris.}
%ANR BLAN07-218426.}
\thankstext{T2}{Supported by the French Ministry of Education through
the Grant ANR BLAN07-218426.}
\runtitle{Critical parameters for metapopulation models}

\begin{aug}
\author{\fnms{Davide} \snm{Borrello}\corref{}\ead[label=e1]{d.borrello@campus.unimib.it}}
\runauthor{D. Borrello}
\affiliation{Universit\`a degli Studi di Milano Bicocca and
CNRS--Universit\'e de Rouen}
\address{Dipartimento di Matematica e Applicazioni\\
Universit\`a degli Studi di Milano Bicocca\\
Via Cozzi 53\\
20125 Milano\\
Italy\\
and\\
Laboratoire de Math\'ematiques Rapha\"el Salem\\
UMR 6085 CNRS--Universit\'e de Rouen\\
Avenue de l'Universit\'e BP.12\\
F76801 Saint-\'Etienne-du-Rouvray\\
France\\
\printead{e1}} %adresu isvedimo komanda gale!
\end{aug}

% HISTORY:
\received{\smonth{3} \syear{2010}}
\revised{\smonth{4} \syear{2011}}

% ABSTRACT
%
\begin{abstract}
We use interacting particle systems to investigate survival and
extinction of a species with colonies located on each site of $\mathbb
{Z}^d$. In each of the four models studied, an individual in a local
population can reproduce, die or migrate to neighboring sites.

We prove that an increase of the death rate when the local population
density is small (the Allee effect) may be critical for survival, and
that the migration of large flocks of individuals is a possible
solution to avoid extinction when the Allee effect is strong. We use
attractiveness and comparison with oriented percolation, either to
prove the extinction of the species, or to construct nontrivial
invariant measures for each model.
\end{abstract}

\begin{keyword}[class=AMS]
\kwd[Primary ]{60K35}
\kwd{60K35}
\kwd[; secondary ]{82C22}.
\end{keyword}

\begin{keyword}
\kwd{Interacting particle systems}
\kwd{phase transition}
\kwd{metapopulation models}
\kwd{Allee effect}
\kwd{mass migration}
\kwd{stochastic order}
\kwd{comparison with percolation}.
\end{keyword}

% KEYWORDS

\end{frontmatter}

%s1 ###
\section{Introduction}
\label{secintrmetapopulation}

A metapopulation model refers to many small local populations connected
via migrations in a fragmented environment. Each local population
evolves without spatial structure; it can increase or decrease,
survive, get extinct or migrate from its site in different ways; see
\cite{cfHanski} for more about metapopulations.

The most natural model for the evolution of a single population is the
branching process; see~\cite{cfGaltonWatts}: birth and death rates
depend on the number of individuals of the population, and the growth
rate is density dependent.

If the birth rate is always larger than the death rate, if the
population survives, it will increase indefinitely. If the birth rate
is smaller than or equal to the death rate, the population will become
extinct almost surely~\cite{cfWilliams}. A more interesting
situation is given by a birth rate larger than the death rate under a
particular population size~$N$, and smaller over that. The real
environments observation suggests that this process is gradual; that
is, the growth rate decreases over a population size as population
density increases. In some of our applications we suppose that over a
fixed number of individuals $N$ (the \textit{capacity} of a site), the
growth rate is null.

Many biological phenomena may influence the dynamics of a
metapopulation.

Migration is one of the most important strategies that a species adopts
to improve its probability of survival (see, e.g., \cite
{cfBrassil,cfHanski,cfalleestephens}) when the
population size is large one or more individuals leave the site where
they are located to look for new resources in different sites.

Other biological factors may favor the extinction of a species. One of
them, \textit{the Allee effect}, consists of an increase in the death
rate when the density of individuals is small. The reason is that at
low density many factors (as difficulties in finding mates) cause a
decrease of fecundity and an increase of mortality;
see~\cite{cfallee1,Allee1,Allee4,cfalleestephens}.

We simplify the real structure, and we treat $4$ metapopulation models
from a mathematical point of view: we start from the easier one by
adding a new biological phenomenon at each model.

The mathematical models are interacting particle systems on $\Omega
=X^{\mathbb{Z}^d}$, where $X \subseteq\mathbb{N}$: each particle
represents one individual and on each site of~$\mathbb{Z}^d$ there is a
local population with capacity $N$ (possibly $N=\infty$), which evolves
in different ways depending on the model. The local populations are
connected via migrations of individuals, that is, jumps of particles
from a~site to another one.

In Section \ref{Background} we introduce the particle system, give the
main definitions and notation and state the attractiveness results,
crucial in the sequel for the existence of critical parameters and
nontrivial invariant measures. Theorem~\ref{cns}, the main result of a
previous paper (\cite{cfBorrello}, Theorem \CNS, inspired by~\cite
{GobronSaada}), gives necessary and sufficient conditions for
attractiveness of a~large class of particle systems. This simplifies
many proofs, since, in order to derive either, if two processes are
stochastically ordered, or if a process is attractive, we do not need
to construct an explicit coupling for each model, but we only have to
check inequalities involving the transition rates.

In \cite{cfschiallee} and \cite{cfschiaggr}, the author considers a
metapopulation model to investigate the roles of mass death (i.e., the
death of all individuals in a local population) and spatial aggregation
in the extinction of a species. In \cite{cfschiallee} he shows that,
in presence of mass death, animals living in large flocks are more
susceptible to extinction than animals living in small flocks: for this
model, mass death can be an alternative to the Allee effect in raising
to the extinction of a species. The new results in~\cite{cfschiaggr}
involve the role of spatial aggregation, which may be either bad or
good for survival in a model respectively, with or without mass death.
For these models the local population Allee effect was not taken into
account. The model introduced, called a \textit{noncatastrophic times
model}, is the following: for a fixed $N<\infty$, on each site of
$\mathbb{Z}^{d}$ we may have up to $N$ individuals; hence $N$ is the
\textit{capacity} of sites. The transitions of the Markov process $(\eta
_{t})_{t\geq0}$ are
\begin{eqnarray*}
\eta_t(x) &\to&\eta_t(x)+1 \qquad\mbox{at rate } \eta_t(x)\varphi+\lambda
\sum_{y\sim x}\I_{\{\eta_t(y)=N\}} \mbox{ for }0\leq\eta_t(x)< N,\\
\eta_t(x)&\to&\eta_t(x)-1 \qquad\mbox{at rate } 1 \mbox{ for }1\leq\eta
_t(x)\leq N,
\end{eqnarray*}
where $y \sim x$ are neighbors. In other words, each individual gives
birth to another one on the same site with rate $\varphi$ and dies with
rate $1$. An individual on site $x$ gives birth to a new individual in
a neighboring site with rate $\lambda/N$ only when the population at
$x$ has reached the maximal size $N$. There is a~critical parameter for
the capacity $N$ of sites:
\begin{teo}[({\cite{cfschiaggr}, Theorem 2})]
Assume that $d\geq2$, $\lambda>0$ and $\varphi>0$. There is a critical
value $N_{c}(\lambda,\varphi)$ such that if $N>N_{c}(\lambda,\varphi)$,
then starting from any finite number of individuals, the population has
a strictly positive probability of surviving.
\label{intrschi1}
\end{teo}

Starting from noncatastrophic times model, we propose $4$ models to
improve the understanding of species dynamics.
We want to investigate, \textit{for the first time in a model with
spatial structure}, the role of the Allee effect, the role of mass
migration and their interactions.

In Section \ref{ModelI} we introduce \textit{Model I}. This will
represent our basic model with neither Allee effect nor mass migration.
We begin with a system very similar to Schinazi's model: since a
further step consists in adding migration of many individuals, we
consider a migration of one individual to a neighboring site instead of
a birth of a new individual. If $N=1$, such a difference does not allow
survival for the model with migrations, since no new births are
possible, and the process gets extinct for any $\lambda$: this is
definitely not the case for the noncatastrophic times model with
$N=1$, which is the contact process. If $N$ is large this small
difference does not change the behavior of the model.

This is the basic model, and it must be as easy as possible (births and
deaths on the same site and migrations from one site to another, all
for at most one particle at time). For this reason we do not consider
mass death, which is an additional complex factor.

We take the birth rate larger than the death rate, but we fix a
capacity~$N$ per site. A migration of one individual from a site $x$
toward a nearest neighbor one, is allowed only when the population on
$x$ reaches $N$. We prove that in some cases there is almost sure
extinction, and in others the species survives with positive
probability: the key tool to prove survival is the comparison technique
with a supercritical oriented percolation model; see
\cite{cfdurrettten}.\vadjust{\goodbreak}

In Section \ref{ModelII} we introduce \textit{Model II}, that points out
the key role of the Allee effect in species dynamics. Schinazi used
mass death to prove that it can be considered an alternative to the
Alle effect for extinction of a~species. Since both the Allee effect
and mass death improve the probability of extinction, in order to
understand the role of one of them they should be considered separately.
Here we want to show that a strong Allee effect (with neither mass
death nor mass migration) is a key factor for the extinction.

We add the Allee effect to Model I. Different probabilistic tools have
already been used to illustrate the Allee effect, like stochastic
differential equations (see~\cite{Dennis1}), discrete-time Markov
chains (see \cite{Allen}) or diffusion processes (see~\cite{Dennis2}),
but none of these models has a spatial structure.

In Model II each site has a capacity $N$, but the death rate is larger
than the birth rate for small densities. Migration works exactly as in
Model I. Theorem \ref{phasetransition2} states that for all possible
capacities, growth and migration rates, there exists an Allee effect
large enough for the species to become extinct. It is proved through
comparison with subcritical percolation.

In \textit{Model III}, introduced in Section \ref{ModelIII}, we allow a
migration of more than one individual at a time from one site to the
neighboring one in a species affected by the Allee effect. We prove
that \textit{mass migration might be the possible strategy of a species
to reduce the Allee effect} and improve its survival probability.

When a local population size reaches $N$, a migration of a number of
individual smaller than a fixed $M$ is possible.
In Model II, for an Allee effect large enough, the species gets
extinct. In Model III, if $N$ is large enough there exists $M$ such
that this is no longer true. A migration of large flocks avoids small
densities in a new environment which are bad for survival. Indeed, by
comparison arguments with oriented percolation, even if the Allee
effect is the strongest one, if the species lives and migrates in
flocks large enough, survival is possible (Theorem \ref{phasetransition3}).

In Section \ref{ModelIV} we generalize the previous models: in \textit
{Model IV}, instead of fixing a capacity $N$, we consider a slightly
more realistic model. In all environments there is no maximal size, but
a kind of self-mechanism of birth control such that the death rate is
larger than the birth rate when there are more than $N$ individuals in
a local population. A migration of one or more individuals is allowed
from a site with more than $N$ individuals toward a~site with few
individuals. We prove in Theorem~\ref{phasetransition4} that in some
cases we can have survival but on each site the population does not
explode even if there is no capacity. Namely, on each site the expected
value of the number of individuals is finite. In other cases the
species becomes extinct.

Note that on each model instead of fixing the death rate equal to 1 and
letting the birth rate vary (the most used approach), we consider the
reverse but equivalent point of view in order to clarify our proofs,
presented in
Section \ref{proofs}.

%s2 ###
\section{Background and tools}
\label{Background}

The mathematical model is an interacting particle system $(\eta
_{t})_{t\geq0}$ on $\Omega=X^{\mathbb{Z}^d}$, where $X=\{0,1,\ldots,N\}
\subseteq\mathbb{N}$ and $N$ denotes the common size (capacity) of the
local populations, if finite. The value~$\eta_{t}(x)$, $x \in\mathbb
{Z}^{d}$, is the number of individuals present in site $x$ at time $t
\geq0$. We write~$\OmNless$ when we want to stress the dependency on
the capacity $N$.

When $X$ is finite, which is the case of Models I, II and III, we refer
to the construction in \cite{cfLiggett}; when $X$ is infinite, that
is, in Model IV, the state space is noncompact, and a different
construction is needed. The first examples of interacting particle
systems with locally interacting components in noncompact state spaces
have been introduced in \cite{Spitzer}. One approach to construct these
kinds of models has been developed in \cite{cfLigSpitz}, where the
construction was detailed for Coupled Random Walks, but with small
changes it can be generalized to many other processes. By using similar
ideas, in \cite{cfChenbook} was stated a general existence theorem for
reaction-diffusion processes, that we are going to apply in Model IV:
in order to assure the existence of the process, some restrictions on
the transition rates are required, as explained in Section \ref
{ModelIV}.

The process admits an \textit{invariant measure} $\mu$ if $P_{\mu}(\eta
_{t}\in A)=\mu(A)$ for each $t\geq0$, $A \subseteq\Omega$, where
$P_{\mu}$ is the law of the process with initial distribution $\mu$. An
invariant measure is \textit{trivial} if it is concentrated on an
absorbing state, when one exists. The process is \textit{ergodic} if
there is a unique invariant measure to which the process converges
starting from each initial distribution (see~\cite{cfLiggett}, Definition
1.9). For any $x,y \in\mathbb{Z}^{d}$, we write $y \sim
x$ if $y$ is one of the $2d$ nearest neighbors of site $x$.

We introduce here a common infinitesimal generator $\mathcal{L}$ (we
will be more precise on each model): it is given by
\begin{eqnarray}\label{generator}\qquad
\mathcal{L}f(\eta)&=&\sum_{x \in\mathbb{Z}^d}\sum_{k \in X}\biggl\{
P^k_{\eta(x)}\bigl(f(S^{k}_{x}\eta)-f(\eta) \bigr)+P^{-k}_{\eta(x)}
\bigl(f(S^{-k}_{x}\eta(x))-f(\eta)\bigr)
\nonumber
\\[-8pt]
\\[-8pt]
\nonumber
&&\hspace*{98pt}{}+\sum_{y \sim x}\frac{1}{2d}\Gamma^k_{\eta(x),\eta(y)}
\bigl(f(S^{-k,k}_{x,y}\eta)-f(\eta)\bigr)\biggr\},
\end{eqnarray}
where $f$ is a local function, $\eta\in\Omega$, $S^{-k,k}_{x,y}$,
$S^{k}_{y}$ and $S^{-k}_{y}$, where $k > 0$, are local operators
performing the transformations whenever possible
\begin{eqnarray}
\label{intrSkk}
(S^{-k,k}_{x,y}\eta)(z)&=&\cases{
\eta(x)-k, & \quad$\mbox{if } z=x \mbox{ and }\eta(x)-k \in X, \eta(y)+k \in
X,$\vspace*{2pt}\cr
\eta(y)+k, & \quad$\mbox{if } z=y \mbox{ and }\eta(x)-k \in X, \eta(y)+k \in
X,$\vspace*{2pt}\cr
\eta(z), &\quad$\mbox{otherwise},$}\hspace*{-35pt}
\\
\label{intrSk}(S^{k}_{y}\eta)(z)&=&\cases{
\eta(y)+k, & \quad $\mbox{if } z=y \mbox{ and } \eta(y)+k \in
X,$\vspace*{2pt}\cr
\eta(z), & \quad$\mbox{otherwise},$}
\\
\label{intrS-k}
(S^{-k}_{y}\eta)(z)&=&\cases{
\eta(y)-k, & \quad $\mbox{if } z=y \mbox{ and } \eta(y)-k \in
X,$\vspace*{2pt}\cr
\eta(z), & \quad$\mbox{otherwise},$}
\end{eqnarray}
$P^{k}_{\cdot}$, $P^{-k}_{\cdot}$ are positive functions from $X$ to
$\mathbb{R}$, and in our four models $k=0,1$ (particles are born and
die one at a time).

\textit{We assume $P^1_0=0$, that is, the Dirac measure concentrated on
the empty configuration $\delta_{\underline{0}}$ is a trivial invariant
measure}. The function\vspace*{1pt} $\Gamma^k_{\eta(x),\eta(y)}$ represents the
migration (jump) rate; a jump of more than one particle per time is
possible. We call \textit{emigration} from $x$ a jump that reduces the
number of particles on $x$ and \textit{immigration} a jump that increases
it.

There is a natural definition of partial order on the state space,
%
%e1 ###
\begin{equation}
\forall\xi,\eta\in\Omega,\qquad \xi\leq\eta\quad\Leftrightarrow\quad
\bigl(\forall x \in S, \xi(x) \leq\eta(x)\bigr).
\end{equation}
A process $(\eta_{t})_{t\geq0}$ with\vspace*{1.5pt} generator $\mathcal{L}$ is \textit
{stochastically larger} than a process $(\xi_{t})_{t\geq0}$ with
generator $\widetilde{\mathcal{L}}$ if, given $\xi_0\leq\eta_0$, there
exists an increasing Markovian coupling $(\xi_t,\eta_t)_{t\geq0}$ on
state space $\Omega\times\Omega$ such that
\[
\mathbb{P}^{(\xi_0,\eta_0)}(\xi_t\leq\eta_t)=1,
\]
for all $t\geq0$, where $\mathbb{P}^{(\xi_0,\eta_0)}$ denotes the
distribution of $(\xi_t,\eta_t)_{t\geq0}$ with initial state $(\xi
_0,\eta_0)$.\vspace*{1pt} In this case the process $(\xi_{t})_{t\geq0}$ is
stochastically smaller than $(\eta_{t})_{t\geq0}$, and the pair $(\xi
_{t},\eta_{t})_{t\geq0}$ is stochastically ordered; see \cite{cfBorrello}, Section
\ref{Background}. If $\mathcal{L}=\widetilde{\mathcal{L}}$, and there is
stochastic order between two processes with ordered initial
configurations, then the process is \textit{attractive}; see
\cite{cfLiggett},
Definition II.2.2.

Necessary and sufficient conditions for stochastic order and
attractiveness in a general class of particle systems including the
models defined by generator (\ref{generator}) have been derived by
\cite{cfBorrello}, Theorem \CNS, which generalizes \cite{GobronSaada}, Theorem
2.21. Since (\ref{generator}) involves neither births nor
deaths depending on neighboring sites, this theorem can be restated as follows:
\begin{teorema}[({\cite{cfBorrello}, Theorem \CNS})]
Given $K\in\mathbb{N}$, $\mathbf{j}:=\{j_{i}\}_{1\leq i \leq K}$,
$\mathbf{m}:=\{m_{i}\}_{1\leq i \leq K}$, $\mathbf{h}:=\{h_{i}\}_{1\leq
i \leq K}$, three nondecreasing $K$-uples in $\mathbb{N}$, and $\alpha
,\beta,\gamma,\delta$ in~$X$ such that $\alpha\le\gamma$, $\beta\leq
\delta$, we define
%
%e5 ###
%e4 ###
%e3 ###
%e2 ###
\begin{eqnarray}
I_{a}&:=&I^K_{a}(\mathbf{j},\mathbf{m})= \bigcup^{K}_{i=1}\{
k\in X\dvtx m_{i}\geq k > \delta-\beta+j_{i}\},
\label{Ia}\\
I_{b}&:=&I^K_{b}(\mathbf{j},\mathbf{m})= \bigcup^{K}_{i=1}\{
k\in X\dvtx \gamma-\alpha+m_{i} \geq k > j_{i}\},
\label{Ib}\\
I_{c}&:=&I^K_{c}(\mathbf{h},\mathbf{m})= \bigcup^{K}_{i=1}\{
k\in X\dvtx m_{i}\geq k > \gamma-\alpha+h_{i}\},
\label{Ic}\\
I_{d}&:=&I^K_{d}(\mathbf{h},\mathbf{m})= \bigcup^{K}_{i=1}\{
k\in X\dvtx \delta-\beta+m_{i} \geq k > h_{i}\}.
\label{Id}
\end{eqnarray}
A particle system\vspace*{-1pt} $(\eta_{t})_{t \geq0}$ with transition rates $\{
\Gamma^{k}_{a,b},P^{k}_{b},P^{-k}_{a}\}_{\{a,b,k\in X\}}$ is \textit
{stochastically larger} than a particle system $(\xi_{t})_{t \geq0}$
with transition rates
$\{\widetilde{\Gamma}^{k}_{a,b},\allowbreak\widetilde
{P}^{k}_{b}, \widetilde{P}^{-k}_{a}\}_{\{a,b,k\in X\}}$ if and only if
\begin{eqnarray}
\sum_{k\in X\dvtx  k > \delta-\beta+j_{1}}\widetilde{P}_{\beta}^{k}+\sum_{k
\in I_{a}}\widetilde{\Gamma}_{\alpha, \beta}^{k}&\leq&
\sum_{l\in X\dvtx l>j_{1}}P_{ \delta}^{l}+\sum_{l \in I_{b}}\Gamma_{\gamma,
\delta}^{l},
\label{C+}\\[-1pt]
\label{C-}
\sum_{k\in X\dvtx k>h_{1}}\widetilde{P}_{\alpha}^{-k}+\sum_{k \in
I_{d}}\widetilde{\Gamma}_{\alpha, \beta}^{k}&\geq&
\sum_{l\in X\dvtx  l > \gamma-\alpha+h_{1}}P_{\gamma}^{-l}+\sum_{l \in
I_{c}}\Gamma_{\gamma, \delta}^{l}
\end{eqnarray}
for all choices of $K$, $\mathbf{h}$, $\mathbf{j}$, $\mathbf{m}$,
$\alpha\leq\gamma$ and $\beta\leq\delta$.
\label{cns}
\end{teorema}
\begin{remark}
It is not possible that an infinite value for $K$, $I_a$, $I_b$, $I_c$,
$I_d$ results in the same rate inequality: therefore one restricts to
take $K$ smaller than the maximal change (birth, death or migration) of
particles involved in a transition; see \cite{cfBorrello}, Remark 2.5.
\end{remark}
\begin{remark}
To prove Theorem \ref{cns}, following the approach of \cite
{GobronSaada}, we first show that conditions (\ref{C+})--(\ref{C-}) are
necessary. Then we construct a~Markovian coupling which turns out to be
increasing under (\ref{C+})--(\ref{C-}); see \cite{cfBorrello}, Section~\ref{ModelI}.
Hence if conditions (\ref{C+})--(\ref{C-}) are not
satisfied it is not possible to find a coupling that preserves the
order between the two processes.
\label{cnsproof}
\end{remark}

By taking two processes with the same transition rates, Theorem \ref
{cns} states necessary and sufficient conditions for attractiveness. We
use attractiveness of a process to construct a nontrivial invariant
measure starting from an initial configuration $\eta_0 \in\OmN$, where
%
%e6 ###
\begin{equation}
\OmN:=\{\eta\in\Omega\dvtx  \eta(x)=N \mbox{ for all }x \in\mathbb{Z}^d\}.
\label{Omegan}
\end{equation}

\begin{remark}[({\cite{cfBorrello}, Proposition \PRM})]\label{remM1} For processes
with births, deaths and jumps of at most one particle per site,
conditions (\ref{C+}) and (\ref{C-}) reduce to
\begin{eqnarray}
\widetilde{P}_{\beta}^{1}+\widetilde{\Gamma}_{\alpha, \beta}^{1}&\leq&
P_{ \delta}^{1}+\Gamma_{\gamma, \delta}^{1} \qquad \mbox{if } \beta=\delta
\mbox{ and } \gamma\geq\alpha,
\label{C+1}\\
\widetilde{P}_{\beta}^{1}&\leq& P_{\delta}^{1} \qquad \mbox{if } \beta
=\delta\mbox{ and } \gamma= \alpha,
\label{C+10}\\
\widetilde{P}_{\alpha}^{-1}+\widetilde{\Gamma}_{\alpha, \beta}^{1}&\geq
&P_{\gamma}^{-1}+\Gamma_{\gamma, \delta}^{1} \qquad \mbox{if } \gamma
=\alpha\mbox{ and } \delta\geq\beta,
\label{C-1}\\
\widetilde{P}_{\alpha}^{-1}&\geq&P_{\gamma}^{-1} \qquad \mbox{if } \gamma
=\alpha\mbox{ and } \delta= \beta.
\label{C-10}
\end{eqnarray}
\end{remark}
\begin{remark}
By \cite{cfBorrello}, Corollary \Cor, the sufficient condition still
holds if we consider systems with more\vspace*{1pt} general transition rates $\Gamma
^k_{\eta(x),\eta(y)}(x,y)$ and~$P^{k}_{\eta(x)}(x)$, not translation invariant.
In this case there is stochastic order if conditions (\ref{C+})--(\ref
{C-}) [resp., (\ref{C+1})--(\ref{C-10}) if $N=1$] are satisfied for each
pair of sites $(x,y)$ and configurations $\eta\leq\xi$ with $\eta
(x)=\alpha$, $\eta(y)=\beta$, $\xi(x)=\gamma$, $\xi(y)=\delta$.
\label{noninv}
\end{remark}

Remark \ref{noninv} will be used in some steps of the further proofs
(for Theorems~\ref{phasetransition1} and \ref{phasetransition2}), where
in order to make a comparison with oriented percolation, we will
introduce systems with different transition rates in different space
regions, so that they do not satisfy the hypothesis of Theorem~\ref{cns}.
\begin{definition}
For a process $(\eta_t)_{t \geq0}$ there is \textit{survival of the
species} if
%
%e7 ###
\begin{equation}
\mathbb{P}(|\eta_{t}| \geq1 \mbox{ for all }t \geq0)>0,
\end{equation}
where $|\eta_{t}|$ denotes the number of individuals at time $t$, and
$|\eta_0|$ is finite. Otherwise the species \textit{becomes extinct}. If
the process starts from an infinite~$\eta_0$ we say
that the species becomes extinct if the process converges to $\delta
_{\underline{0}}$.
The convergence to $\delta_{\underline{0}}$ is intended that for any
finite $S \subset\mathbb{Z}^d$, the probability that there exists
$t_0$ such that for all $t>t_0$, $\eta_t(x)=0$ for all $x \in S$ tends
to 1.
\end{definition}

%s3 ###
\section{Model I: The basic model}
\label{ModelI}

We introduce Model I. We choose to fix a~birth rate equal to $1$ and to
associate two parameters to death and migration rates.
Given~$\phi$ and $\lambda$ positive real numbers, transitions are, for
all $x \in S$, $y\in S$, $y \sim x$ [we follow the notation in (\ref{generator})]
\begin{eqnarray}\label{rates1}
\eta_t(x)&\to&\eta_t(x)+1 \qquad\mbox{at rate } P^1_{\eta_t(x)}=\eta_t(x)\I
_{\{\eta_t(x)<N\}},\nonumber\\
\eta_t(x)&\to&\eta_t(x)-1 \qquad \mbox{at rate } P^{-1}_{\eta_t(x)}=\phi
\eta_t(x),
\nonumber
\\[-8pt]
\\[-8pt]
(\eta_t(x),\eta_t(y))&\to&\bigl(\eta_t(x)-1,\eta_t(y)+1\bigr)\nonumber\\
\eqntext{\displaystyle \mbox{at rate }
\frac{1}{2d}\Gamma^1_{\eta_t(x),\eta_t(y)}=\frac{\lambda}{2d}\I_{\{\eta_t(x)=N,\eta_t(y)<N\}}.}
\end{eqnarray}
The model has the following monotonicity properties:
\begin{proposizione}
Let $(\xi_{t})_{t\geq0}$, $(\eta_{t})_{t\geq0}$ be two processes with
respective parameters $(\phi_{1},\lambda,N)$ and $(\phi_{2},\lambda
,N)$ such that $\phi_{1} \leq\phi_{2}$. Then $(\xi_{t})_{t\geq0}$ is
stochastically larger than $(\eta_{t})_{t \geq0}$, and $(\eta_{t})_{t
\geq0}$ is an attractive process.
\label{monotonicity0}
\end{proposizione}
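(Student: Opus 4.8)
The plan is to invoke Theorem \ref{cns}, the necessary and sufficient condition for stochastic order. Since every transition of Model I (birth, death and migration) moves at most one particle, I would use the simplified form of the criterion given in Remark \ref{remM1}, namely Conditions (\ref{C+1})--(\ref{C-10}). One preliminary point of bookkeeping: the convention of Theorem \ref{cns} makes the \emph{non-tilde} process the stochastically larger one, whereas the proposition asserts that $(\xi_t)_{t\geq 0}$, which carries the \emph{smaller} death parameter $\phi_1$, is the larger. I would therefore identify the non-tilde rates $\{P^1, P^{-1}, \Gamma^1\}$ with $(\xi_t)_{t\geq 0}$ (parameter $\phi_1$) and the tilde rates $\{\widetilde P^1, \widetilde P^{-1}, \widetilde\Gamma^1\}$ with $(\eta_t)_{t\geq 0}$ (parameter $\phi_2$). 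Reading off (\ref{rates1}), the two processes share the same birth rate $P^1_b = \widetilde P^1_b = b\,\I_{\{b<N\}}$ and the same migration rate $\Gamma^1_{a,b} = \widetilde\Gamma^1_{a,b} = \lambda\,\I_{\{a=N,\,b<N\}}$, and differ only through the death rate $P^{-1}_a = \phi_1 a \leq \phi_2 a = \widetilde P^{-1}_a$.

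Next I would check each of (\ref{C+1})--(\ref{C-10}) for all $\alpha\leq\gamma$ and $\beta\leq\delta$. In (\ref{C+1}) and (\ref{C+10}) the birth contributions coincide because $\beta=\delta$, so it remains to compare the immigration terms: the constraint $\gamma\geq\alpha$ gives $\I_{\{\alpha=N\}}\leq\I_{\{\gamma=N\}}$, whence $\widetilde\Gamma^1_{\alpha,\beta}\leq\Gamma^1_{\gamma,\delta}$, which is exactly what (\ref{C+1}) requires (and (\ref{C+10}) has no migration term). In (\ref{C-1}) and (\ref{C-10}) the death contribution satisfies $\widetilde P^{-1}_\alpha=\phi_2\alpha\geq\phi_1\alpha=P^{-1}_\gamma$ because $\gamma=\alpha$ and $\phi_1\leq\phi_2$; for the emigration term in (\ref{C-1}) the constraint $\delta\geq\beta$ gives $\I_{\{\delta<N\}}\leq\I_{\{\beta<N\}}$, so $\widetilde\Gamma^1_{\alpha,\beta}\geq\Gamma^1_{\gamma,\delta}$, and adding the two inequalities yields (\ref{C-1}). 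Thus all four conditions hold, and Theorem \ref{cns} gives that $(\xi_t)_{t\geq 0}$ is stochastically larger than $(\eta_t)_{t\geq 0}$.

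For attractiveness I would run the same verification with $\phi_1=\phi_2=\phi$, i.e. for two copies of a single process, so that $\widetilde P=P$ and $\widetilde\Gamma=\Gamma$. The death terms then cancel exactly in (\ref{C-1})--(\ref{C-10}), the birth terms cancel in (\ref{C+1})--(\ref{C+10}), and only the indicator comparisons driven by $\gamma\geq\alpha$ and $\delta\geq\beta$ remain; these are precisely the ones just established. Hence Conditions (\ref{C+1})--(\ref{C-10}) hold for equal rates and $(\eta_t)_{t\geq 0}$ is attractive by Theorem \ref{cns}.

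The argument is almost entirely mechanical once Remark \ref{remM1} is available, and I expect no genuine analytic obstacle. The only places needing care are the correct matching of tilde versus non-tilde rates (so that the process with the smaller death rate is declared the larger one) and the monotonicity of the migration indicators: the conditions $\gamma\geq\alpha$ and $\delta\geq\beta$ are exactly what make $\I_{\{\cdot=N\}}$ and $\I_{\{\cdot<N\}}$ comparable in the right direction. This bookkeeping, rather than any estimate, is the main thing to get right.
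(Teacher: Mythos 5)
Your proposal is correct and follows essentially the same route as the paper: both verify the simplified conditions (\ref{C+1})--(\ref{C-10}) of Remark \ref{remM1} (derived from Theorem \ref{cns}), using that the birth and migration rates coincide, the migration indicator $\I_{\{\cdot=N,\cdot<N\}}$ is monotone in the right directions, and $\phi_1\leq\phi_2$ handles the death terms, with attractiveness obtained by taking $\phi_1=\phi_2$. The only difference is notational (you work with $\alpha,\beta,\gamma,\delta$ while the paper writes the conditions directly in terms of $\eta(x),\eta(y),\xi(x),\xi(y)$), and your explicit remark about matching the tilde/non-tilde convention to the process with the larger death rate is exactly the bookkeeping the paper performs implicitly.
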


The key for attractiveness, which is a consequence of the stochastic
ordering when $\phi_1=\phi_2$, is that there are births, deaths and
migrations of at most one particle per time and the migration rate from
$\eta_t(x)$ to $\eta_t(y)$ is nondecreasing in $\eta_t(x)$ and
nonincreasing in $\eta_t(y)$.
\begin{cor}\label{monotonicity1}
Given $(\eta^\xi_{t})_{t\geq0}$ such that $\eta^{\xi}_{0}=\xi$, then
\[
\mathbb{P}(|\eta^{\xi}_{t}| \geq1 \mbox{ for all }t \geq0)
\]
is nonincreasing in $\phi$ for each $\xi\in\Omega$.
\end{cor}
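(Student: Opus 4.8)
The plan is to deduce the corollary directly from the stochastic ordering of Proposition \ref{monotonicity0}, by coupling two copies of the process started from the \emph{same} configuration $\xi$ but with different death parameters, and then reading off the effect on the total number of individuals $|\eta_t|$.

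First I would fix $\phi_1 \leq \phi_2$ and keep $\lambda$ and $N$ fixed, writing $(\eta^{\xi,\phi_1}_t)_{t\geq 0}$ and $(\eta^{\xi,\phi_2}_t)_{t\geq 0}$ for the two processes with these respective death parameters, both started from $\eta_0 = \xi$ (with $|\xi|$ finite, as required for survival to be defined). By Proposition \ref{monotonicity0}, since $\phi_1 \leq \phi_2$, the process with the smaller death parameter $\phi_1$ is stochastically larger than the one with $\phi_2$. Because both are started from the \emph{same} configuration, the hypothesis $\eta_0 \leq \eta_0$ in the definition of stochastic order holds trivially by reflexivity of $\leq$, so there is an increasing Markovian coupling $(\eta^{\xi,\phi_2}_t, \eta^{\xi,\phi_1}_t)_{t\geq 0}$ on $\Omega \times \Omega$ with $\eta^{\xi,\phi_2}_t(x) \leq \eta^{\xi,\phi_1}_t(x)$ for every $x \in \mathbb{Z}^d$ and every $t \geq 0$, almost surely.

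Next I would transfer this coordinatewise domination to the total mass. Since $|\eta_t| = \sum_{x} \eta_t(x)$ is a non-decreasing functional for the partial order $\leq$ on $\Omega$, the coupling yields $|\eta^{\xi,\phi_2}_t| \leq |\eta^{\xi,\phi_1}_t|$ for all $t \geq 0$ almost surely. Hence, on the coupled space, the survival event of the $\phi_2$-process is contained in that of the $\phi_1$-process: if $|\eta^{\xi,\phi_2}_t| \geq 1$ for all $t \geq 0$, then a fortiori $|\eta^{\xi,\phi_1}_t| \geq |\eta^{\xi,\phi_2}_t| \geq 1$ for all $t$. Taking probabilities, and using that each marginal of the coupling has the law of the corresponding process, gives
\[
\mathbb{P}(|\eta^{\xi,\phi_2}_t| \geq 1 \text{ for all } t \geq 0) \leq \mathbb{P}(|\eta^{\xi,\phi_1}_t| \geq 1 \text{ for all } t \geq 0),
\]
which is exactly the asserted monotonicity in $\phi$.

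The argument is short, so there is no genuinely hard step; the points requiring care are purely a matter of bookkeeping. One must pin down the direction of the comparison correctly: a \emph{smaller} death parameter produces the \emph{stochastically larger} process and hence the larger survival probability, so that the survival probability \emph{decreases} as $\phi$ increases. One should also record that the stochastic-order definition is being invoked with identical initial data, which is legitimate since $\leq$ is reflexive. Finally, should one wish to cover infinite initial configurations under the convergence-to-$\delta_{\underline{0}}$ notion of extinction, the same coupling applies verbatim, comparing the two processes on finite windows $S \subset \mathbb{Z}^d$ rather than through the global mass $|\eta_t|$.
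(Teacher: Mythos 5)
Your proof is correct and is essentially the argument the paper intends: Corollary \ref{monotonicity1} is stated as an immediate consequence of Proposition \ref{monotonicity0}, obtained exactly as you do by coupling the two processes from the same initial configuration $\xi$ and noting that the increasing coupling forces containment of the survival events, hence monotonicity of the survival probability in $\phi$. Your bookkeeping on the direction of the comparison (smaller $\phi$ gives the stochastically larger process) matches the paper.
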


\begin{remark}
There is no stochastic order between systems with different values of
$N$ or $\lambda$. Indeed, in these cases, the conditions of Theorem \ref
{cns} are not satisfied.
\label{remNlambda}
\end{remark}

The first result corresponds to Theorem \ref{intrschi1} for the
noncatastrophic times model, and it is proved in a similar way.
\begin{teo}
Suppose $d\geq2$, $\lambda>0$ and $\phi<1$. There exists a critical
value $N_{c}(\lambda,\phi)$ such that if $N >N_{c}(\lambda,\phi)$, then
starting from $\eta_0\in\Omega$ such that $|\eta_0|\geq1$, the
process has a positive probability of survival. Moreover if $\eta_0 \in
\OmN$ the process converges to a nontrivial invariant measure with
positive probability.
\label{phasetransition1b}
\end{teo}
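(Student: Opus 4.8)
The plan is to follow the scheme of Theorem \ref{intr:schi1} and compare the process with a supercritical oriented percolation, using the block construction of \cite{cf:durrett_ten}. Since $d\geq 2$, I would restrict the comparison to a two-dimensional sublattice of $\mathbb{Z}^d$ and renormalize space-time into blocks indexed by the oriented lattice. Declare a block to be \emph{good} when a prescribed site inside it attains the full capacity $N$ during a prescribed time window, demanding moreover that this be produced using only the particles lying in a bounded spatial neighbourhood, so that the good-block events have finite range of dependence. A full site plays the role of an open site, and an infinite open oriented cluster forces $|\eta_t|\geq 1$ for all $t$; thus it suffices to show that the probability of a good block tends to $1$ as $N\to\infty$, uniformly in the state of the already-good predecessors. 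To get genuine independence from the exterior I would work with the process restricted to a finite box (killing emigrating particles that leave it), which is stochastically smaller by Proposition \ref{monotonicity0} and Remark \ref{non_inv}, so that its survival implies survival of $(\eta_t)_{t\geq 0}$.

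The engine is the single-site dynamics in isolation, a birth-death chain on $\{0,\dots,N\}$ with birth rate $k\,\I_{\{k<N\}}$ and death rate $\phi k$. Because $\phi<1$ the per-capita birth rate exceeds the per-capita death rate, so the chain is supercritical: its quasi-stationary distribution concentrates near $N$, and starting from $\eta(x)=N$ the time needed to drop below $N/2$ (in particular to reach $0$) is at least $e^{cN}$ with probability tending to $1$, by standard hitting-time estimates for birth-death chains. The same estimates show that the chain spends a positive fraction of time exactly at the level $N$; hence over a window of length $T$ the full site attempts to emigrate towards each non-saturated neighbour a number of times of order $T$.

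Colonization is the second ingredient. A single individual deposited on an empty neighbour starts a subpopulation whose early evolution dominates a linear birth-death (branching) process with birth rate $1$ and death rate $\phi$; the latter survives, and hence the colony reaches level $N$ before extinction, with probability bounded below by a constant close to $1-\phi>0$. I would then choose the time window $T=T(N)\to\infty$ growing sub-exponentially, so that on one hand the source stays near $N$ throughout $[0,T]$ with probability tending to $1$, and on the other hand the number of (essentially independent) colonization attempts tends to infinity. Consequently the probability that a given neighbour becomes full inside the window tends to $1$, and so does the probability of a good block.

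Finally, the good-block events have bounded range of dependence and marginal probability tending to $1$; by the standard domination result for such fields (\cite{cf:durrett_ten}), for $N$ larger than some $N_c(\lambda,\phi)$ the comparison process dominates a supercritical oriented percolation, which percolates with positive probability. This yields a strictly positive survival probability once some site is full; starting from an arbitrary $\eta_0$ with $|\eta_0|\geq 1$, a single individual fills its site with positive probability by the branching comparison above, so survival has positive probability in general. For the last assertion, attractiveness (Proposition \ref{monotonicity0}) makes the law of the process issued from the top configuration $\OmN$ stochastically non-increasing in $t$, hence convergent to the upper invariant measure $\bar\mu$; the percolation comparison shows that a positive density of full sites persists, so $\bar\mu\neq\delta_{\underline 0}$ is non-trivial. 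The main difficulty I expect is the single-site analysis — quantifying both the exponential persistence near $N$ and the fraction of time spent exactly at $N$ — together with tuning $T(N)$ so as to keep the comparison fields finitely dependent while driving the good-block probability to $1$.
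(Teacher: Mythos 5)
Your proposal is correct in outline, but it follows a genuinely different renormalization scheme from the paper's. The paper does not prove this theorem directly: it obtains it as the special case $N_A=0$, $M=N_A+1=1$ of Theorem \ref{phasetransition3}, whose proof follows \cite{cf:schi_aggr} and \cite{cf:kuulasmaa}. There, for each ordered pair of neighbours one defines a colonization event $E_{x,y}$ (starting from $N-1$ particles at $x$, with the process built only from the graphical representation in $[x-1,x+1]^2$, the colony at $y$ eventually reaches $N-1$ particles), with no time window at all; the comparison object is a purely \emph{spatial} locally dependent directed-edge percolation, in which edges emanating from distinct source sites are independent by construction, and Kuulasmaa's theorem for oriented site percolation (this is where $d\geq 2$ enters: one needs both forward edges of a site) concludes. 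The quantitative core there is a chain of counting lemmas over an unbounded horizon: at least $N^{3}$ visits of the source to level $N$, hence at least $N^{1/2}$ migrations onto a fixed neighbour, before the source ever hits $0$, each migrant colonizing with probability $q(\phi)>0$ by transience of the birth-death chain ($\phi<1$). Your space-time block construction with a tuned window $T(N)$ and the finite-dependence domination of \cite{cf:durrett_ten} rests on exactly the same two probabilistic facts — exponential persistence of the supercritical single-site chain near $N$, and colonization probability close to $1-\phi$ by gambler's ruin — so it is sound; what it costs is the bookkeeping of the window ($T(N)$ sub-exponential yet large compared to the colonization time, uniformity over predecessor states); what it buys is a self-contained argument that does not route through the Model III machinery. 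Two small points you should tighten: the single-site dynamics is not exactly the pure birth-death chain you describe, since at level $N$ there is an additional down-rate (at most $\lambda$) coming from emigration — harmless, because it stays bounded while the birth and death rates grow linearly in $N$, but it must be carried through the persistence estimate; and successive colonization attempts at a fixed neighbour are not independent (left-over particles from a failed attempt), so you should dominate them from below by i.i.d.\ trials via monotonicity (Proposition \ref{monotonicity0} and Remark \ref{non_inv}), which is what the paper implicitly does in Step IV of the proof of Theorem \ref{phasetransition3}.
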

\begin{pf}
We skip the proof, since the result is a corollary of Theorem~\ref
{phasetransition3}.
We can get an easier proof that the process has a positive probability
of surviving by slightly modifying \cite{cfschiaggr}, proof of Theorem
2. The differences are that we consider a migration
instead of a birth from $x$ to $y\sim x$, and the migration rate from
$x$ to $y$ is nonincreasing in $\eta_{t}(y)$. Such changes are not
relevant for the proof.
\end{pf}

As we can expect, aggregation is good for Model I, as in
noncatastrophic times model.

\begin{remark}
If $N=1$ the process dies out, since each individual can only migrate
or die.

This suggests that an increase of $N$ is good for the survival of the
species. However, by Remark \ref{remNlambda}, there is no monotonicity
property with respect to~$N$.
\end{remark}

If we fix the capacity $N$, we prove that there is a phase transition
also with respect to the death rate $\phi$.
\begin{teo}\label{phasetransition1}
For all $\lambda>0$, $1<N<\infty$, there exists $\phi_{c}(\lambda,N)<
1$ such that, if $\phi< \phi_{c}(\lambda,N)$ the process starting from
$\eta_0$ with $1 \leq|\eta_0|< \infty$ has a positive probability of
survival and if $\phi>\phi_{c}(\lambda,N)$, the process dies out.
Moreover, for $\eta_0\in\OmN$ if $\phi< \phi_{c}(\lambda,N)$, the
process converges to a nontrivial invariant measure with positive probability.
\end{teo}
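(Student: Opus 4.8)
The plan is to combine the monotonicity of the survival probability in $\phi$ (Corollary \ref{monotonicity1}) with two complementary estimates, survival for small $\phi$ and extinction for $\phi$ close to $1$. Since Corollary \ref{monotonicity1} asserts that $\Prob(|\eta^{\xi}_t|\geq 1\text{ for all }t\geq 0)$ is non-increasing in $\phi$, once survival is shown for some $\phi_0$ and extinction for some $\phi_1>\phi_0$, the value $\phi_c(\lambda,N):=\sup\{\phi:\text{survival holds}\}$ is well defined and yields survival for $\phi<\phi_c$ and extinction for $\phi>\phi_c$. It then remains to show (i) survival for $\phi$ near $0$, giving $\phi_c>0$; (ii) extinction for some $\phi_1<1$, giving $\phi_c\leq\phi_1<1$; and (iii) the non-trivial invariant measure for $\phi<\phi_c$.

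For step (i) I would reuse the supercritical oriented percolation comparison underlying Theorems \ref{phasetransition1b} and \ref{phasetransition3}: when $\phi$ is small, deaths are rare, so a site holding at least one particle fills up to capacity $N$ with probability close to $1$ and then emigrates to each empty neighbour at rate $\lambda/(2d)$. Localizing this in a large space--time block, the probability of the good event---a full site seeds and fills a neighbouring block---can be made as close to $1$ as desired by taking $\phi$ small, so the process dominates a supercritical $1$-dependent oriented percolation in the sense of \cite{cf:durrett_ten}. This gives positive survival probability from any $\eta_0$ with $|\eta_0|\geq 1$, hence $\phi_c>0$, and, run from the maximal configuration $\eta_0\in\OmN$, it also shows that a positive density of occupied sites persists uniformly in $t$. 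For step (iii) I then start from $\OmN$: by attractiveness (Proposition \ref{monotonicity0}) and since $\OmN$ is the maximal state, the law of $\eta_t$ is stochastically non-increasing in $t$, hence converges to an invariant measure $\mu$; the uniform occupation lower bound just obtained forces $\mu(\{\eta:\eta(0)\geq 1\})>0$, so $\mu\neq\delta_{\underline 0}$.

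The new ingredient is step (ii), extinction for $\phi$ close to $1$. Here I would track \emph{colonies}, i.e.\ sites that have reached capacity, since a migration---the only mechanism able to occupy a fresh site---is triggered only from a full site, and dominate the process from above by a branching process of colonies shown to be subcritical. An isolated site performs a birth--death chain on $\{0,\dots,N\}$ with birth rate $k\I_{\{k<N\}}$, death rate $\phi k$ and absorbing state $0$, emigrating at rate at most $\lambda$ while at $N$. A direct computation gives that one emigrant founds a colony (reaches $N$ before $0$ from a single particle) with probability $\tfrac{1-\phi}{1-\phi^N}$, while the expected number of emigrations over a colony's lifetime tends, as $\phi\uparrow 1$, to $\tfrac{\lambda N}{N+\lambda}$, so the effective reproduction number satisfies
\[
R(\phi)\xrightarrow[\phi\uparrow 1]{}\frac{\lambda N}{N+\lambda}\cdot\frac1N=\frac{\lambda}{N+\lambda}<1 .
\]
By continuity $R(\phi_1)<1$ for some $\phi_1<1$, and comparison with a subcritical branching process forces extinction; equivalently one phrases this as a comparison with subcritical oriented percolation, using attractiveness (Proposition \ref{monotonicity0}) to produce the monotone block events. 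This gives $\phi_c(\lambda,N)<1$.

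I expect step (ii) to be the main obstacle. The colony picture is only heuristic, because distinct colonies interact through the shared capacity constraint and because emigrants may land on already-occupied sites, so the offspring counts are not those of a genuine Markov branching process. The delicate point is to make the upper domination rigorous---checking, via Theorem \ref{cns} and the attractiveness of the model, that these interactions can only decrease the true population relative to the free branching (or percolation) system---so that subcriticality of the comparison process transfers to extinction of $(\eta_t)_{t\geq 0}$.
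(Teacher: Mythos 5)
Your skeleton (monotonicity to define $\phi_c$, survival for small $\phi$, an extinction bound, attractiveness for the invariant measure) is the paper's, and your steps (i) and (iii) coincide with it: the paper proves (i) by a block comparison with supercritical oriented percolation, first running the argument with $\phi=0$ inside the box (a site piles up to $N$ and pushes migrants along a preferential path) and then conditioning on the absence of deaths in the space--time box, an event of probability at least $e^{-\phi N(8L)^2T}$; step (iii) is attractiveness plus non-triviality of the upper invariant measure. The divergence is step (ii). The paper does something much simpler and weaker: it couples the total particle count $|\eta_t|$ with a continuous-time Galton--Watson process in which every individual breeds at rate $1$ (the capacity constraint only suppresses births, and migrations do not change $|\eta_t|$) and dies at rate $\phi$, so $|\eta_t|\leq \xi_t$; for $\phi\geq 1$ this Galton--Watson process dies out, hence so does the metapopulation, and a Gronwall argument on $\mathbb{E}(\eta_t(x))$ handles $\phi>1$ from $\OmN$. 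The paper never proves extinction for any $\phi<1$: its step (ii) only yields $\phi_c\leq 1$, and the strict inequality $\phi_c<1$ in the statement is exactly what your step (ii) would supply, were it complete.

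That step, however, has a genuine gap, and it is not cosmetic. Your heuristic numbers are right (the expected number of emigrations per colony does tend to $\lambda N/(N+\lambda)$ and the founding probability from an empty site is $(1-\phi)/(1-\phi^N)\to 1/N$), but the factor $1/N$ that makes $R(\phi)$ subcritical is precisely what interactions destroy: a migrant landing on a site already holding $N-1$ particles founds a colony with probability close to $1$, not $1/N$, so the interacting system can produce \emph{more} colonies than your free branching system, and the claimed upper domination is false as stated rather than merely unproved. It also cannot be rescued by Theorem \ref{cns}: that theorem compares two particle systems on $X^{\Zd}$, and any such upper bound obtained by relaxing the capacity constraint (the only mechanism forcing extinction below $\phi=1$) survives for all $\phi<1$, while a genealogical branching structure of colonies is not an object the theorem covers. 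The crude patch of granting every emigrant a colony gives a reproduction number tending to $\lambda N/(N+\lambda)$, which is $\geq 1$ whenever $\lambda\geq N/(N-1)$, so the estimate collapses for large $\lambda$. To reach the level of rigor of the paper, replace your step (ii) by the Galton--Watson coupling above (extinction for $\phi\geq 1$, hence $\phi_c\le1$); proving extinction at some $\phi_1<1$, as you attempt, would need a genuinely harder argument, e.g.\ subcritical block estimates \`a la van den Berg--Grimmett--Schinazi as in the proof of Theorem \ref{phasetransition2}, which the paper only carries out in the easy regime $\phi_A\to\infty$, not in the nearly critical regime $\phi\uparrow 1$.
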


We prove it in three steps in Section \ref{firstproof}. First [Step
(i)] we find $\phi^1_c(\lambda,N)$ small enough to have survival: by
Proposition \ref{monotonicity0} the process survives for each $\phi$
smaller than $\phi^1_c(\lambda,N)$. Then [Step (ii)] we prove that the
process dies out for all $\lambda, N$ by taking $\phi\geq1$ if it
starts from a finite initial configuration and by taking $\phi>1$ if it
starts from $\eta_0 \in\OmN$. Finally in Step (iii) we use Corollary
\ref{monotonicity1} to obtain the existence of a critical parameter
$\phi_c(\lambda,N)$.\vadjust{\goodbreak}

%
%f1 ###
\begin{figure}

\includegraphics{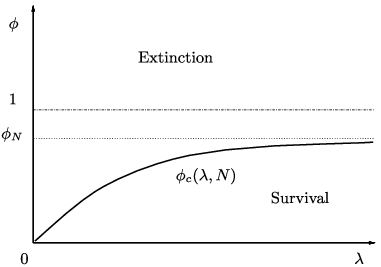}

\caption{Phase diagram of Model $I$ for a fixed $N>1$ and finite
initial configuration: by Theorem~\protect\ref{phasetransition1} there exists
a critical curve $\phi_c(\lambda,N)$ which converges to $0$ as $\lambda
$ goes to zero; it is always smaller than $1$. We conjecture that $\phi
_c(\lambda,N)$ is monotone and as $\lambda$ goes to infinity converges
to a value $\phi_N$ depending on the capacity $N$ of the model which is
strictly smaller than $1$ for each $N<\infty$.}
\label{phase1}
\end{figure}

Figure \ref{phase1} sketches the phase diagram in the $(\lambda,\phi)$ plane.
The model admits a phase transition with respect to the death rate $\phi
$ for each $N\geq2$, while the same process without migrations dies
out almost surely. The effect of a~migration is to move an individual
from a site in state $N$, where there is no possibility to give birth,
to a site with less than $N$ individuals, where it may reproduce
itself. Therefore even if there is no monotonicity with respect to
$\lambda$ (cf. Remark \ref{remNlambda}), this suggests that an
increase of $\lambda$ is good for survival. Contact interactions and
migrations work in a similar way, but small differences are present.
From a mathematical point of view an increase of the migration rate
does not favor ergodicity.

%s4 ###
\section{Model II: The Allee effect}
\label{ModelII}

We translate the Allee effect into mathematical terms for a
metapopulation model. As in Model I, we fix a capacity~$N$ for all
sites, but we assume the death rate larger than (or equal to) the birth
rate when the density is small. Namely, fix a positive integer
$N_{A}\leq N$ and positive real numbers $\phi$, $\lambda$ and $\phi
_{A}\geq1$; the transitions are, for all $x\in S$, $y \in S$, $x\sim
y$, referring to the notation in~(\ref{generator})
\begin{eqnarray}\label{rates2}
\eta_t(x)&\to&\eta_t(x)+1 \qquad\mbox{at rate } P^1_{\eta_t(x)}=\eta_t(x)\I
_{\{\eta_t(x)\leq N-1\}}, \nonumber\\
\eta_t(x)&\to&\eta_t(x)-1
\nonumber
\\[-8pt]\nonumber
\\[-8pt]\\[-18pt]
\eqntext{\displaystyle\mbox{at rate } P^{-1}_{\eta_t(x)}=\eta
_t(x)\bigl(\phi_{A}\I_{\{\eta_t(x) \leq N_{A}\}}+\phi\I_{\{N_{A}<\eta_t(x)\}}\bigr),}\\
(\eta_t(x),\eta_t(y))&\to&\bigl(\eta_t(x)-1,\eta_t(y)+1\bigr)\nonumber\\
\eqntext{\displaystyle\mbox{at rate
}\frac{1}{2d}\Gamma^1_{\eta_t(x),\eta_t(y)}= \frac{\lambda}{2d}\I_{\{\eta_t(x)=N,\eta_t(y)<N\}}.}
\end{eqnarray}
We assume $\phi_A\geq1$ and $\phi_{A} \geq\phi$; in other words if
$\eta_t(x)\leq N_{A}$, then the death rate $\phi_{A}\eta_t(x)$ is
larger than (or equal to) the birth rate $\eta_t(x)$ because of the
Allee effect.
If $\eta_t(x)> N_{A}$, the most interesting situation is given by a
death rate $\phi\eta_t(x)$ smaller than or equal to the birth rate $\eta
_t(x)$, that is, $\phi\leq1$. If either $\phi\geq1$ and $\eta_0$ is
finite or $\phi>1$ and $\eta_0 \in\OmN$ the species gets extinct as
proved in Theorem \ref{phasetransition1}. If $N_{A}=0$ (no Allee
effect) or $N_{A}=N$ (death rate always larger than birth rate), there
is only one death rate, and we are back to Model I.

Since only births, deaths and migrations of at most one particle are
allowed, and the migration rate from $\eta_t(x)$ to $\eta_t(y)$ is
nondecreasing in~$\eta_t(x)$ and nonincreasing in $\eta_t(y)$,
attractiveness conditions are satisfied. One proves in a similar way
that Proposition \ref{monotonicity0} still holds for Model II either
with respect to $\phi_{A}$ or $\phi$, namely:
\begin{proposizione}
Let $(\xi_{t})_{t\geq0}$ and $(\eta_{t})_{t\geq0}$ be two Model
II-type processes with respective parameters $(\phi_{1}, \phi_{A,1},
\lambda,N, N_A)$ and $(\phi_{2}, \phi_{A,2}, \lambda,N, N_A)$ such that
$\phi_{1} \leq\phi_{2}$ and $\phi_{A,1} \leq\phi_{A,2}$. Then $(\xi
_{t})_{t\geq0}$ is stochastically larger than $(\eta_{t})_{t\geq0}$,
and $(\eta_{t})_{t\geq0}$ is attractive.
\label{monotonicity2}
\end{proposizione}

Corresponding Corollary \ref{monotonicity1} holds in a similar way.

We prove that the Allee effect changes the behavior of the system: for
any possible capacity $N$ and migration rates there exists an Allee
effect large enough for the species to become extinct.
\begin{teo}\label{phasetransition2}
Assume $\phi_A\geq1$, and let $\phi_c(\lambda,N)$ be the critical
parameter introduced in Theorem \ref{phasetransition1}. Then for all
$\lambda>0$, $0< N<\infty$, $0< N_{A} \leq N$:
\begin{longlist}[(ii)]
\item[(i)] if $\phi<\phi_c(\lambda,N)$, there exists a value $\phi
^{A}_{c}(\phi,\lambda,N,N_{A})$ such that if $\phi_{A}>\phi^{A}_{c}(\phi
,\lambda,N,N_{A})$, the species becomes extinct for any initial
configuration $\eta_0 \in\OmNless$, and if $\phi_{A}<\phi^{A}_{c}(\phi
,\lambda,N,N_{A})$ the species has a positive probability of survival;
\item[(ii)] if $\phi_c(\lambda,N)<\phi$ $(\leq\phi_A)$, the species
becomes extinct for any initial configuration $\eta_0 \in\OmNless$.
\end{longlist}
\end{teo}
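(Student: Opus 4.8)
The plan is to fix $\lambda$, $N$, $N_A$ and to study the survival probability as a function of the Allee parameter $\phi_A$, using the monotonicity already provided by Proposition \ref{monotonicity2} and the Model II analogue of Corollary \ref{monotonicity1}: for a fixed initial configuration the survival probability is non-increasing in $\phi_A$. With this in hand, part (i) reduces to establishing two facts, namely survival for $\phi_A$ close to $1$ and extinction for $\phi_A$ large; I then define $\phi^A_c(\phi,\lambda,N,N_A)$ as the infimum of those $\phi_A$ for which the species gets extinct, and monotonicity yields the stated dichotomy. The two extreme regimes force $1<\phi^A_c<\infty$ (in the degenerate case $N_A=N$ the dynamics is exactly Model I with death parameter $\phi_A\ge 1>\phi_c(\lambda,N)$, so it dies out for every $\phi_A$ and $\phi^A_c=1$, consistently with the statement). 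Part (ii) is obtained by stochastic comparison: since $\phi\le\phi_A$, Proposition \ref{monotonicity2} shows that the Model II process is stochastically dominated by the Model I process with death parameter $\phi$, which dies out for $\phi>\phi_c(\lambda,N)$ by Theorem \ref{phasetransition1}; the coupling transfers extinction, and, started from $\OmN$, the subcritical comparison used for Model I gives convergence to $\delta_{\underline{0}}$.

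For the survival half of part (i), assuming $N_A<N$, I would run a renormalisation argument comparing the process with a supercritical oriented percolation, adapting the block construction of the survival part of Theorem \ref{phasetransition1} (along the lines of \cite[Proof of Theorem 2]{cf:schi_aggr} and \cite{cf:durrett_ten}). Above the Allee threshold the death rate is $\phi\,\eta(x)$ with $\phi<\phi_c(\lambda,N)$, so as long as the relevant sites are kept above $N_A$ the dynamics coincides with the supercritical Model I. The only new ingredient is that, when a full site emigrates one particle to an empty or nearly empty neighbour, the recipient must climb the Allee zone $\{1,\dots,N_A\}$ before it can reach capacity; there its value follows a birth--death chain with up-rate $k$ and down-rate $\phi_A k$, whose embedded walk has constant up-probability $1/(1+\phi_A)$, so the probability of reaching $N_A+1$ before $0$ from state $1$ is bounded below by a constant depending only on $\phi_A$ and $N_A$ (equal to $1/(N_A+1)$ at $\phi_A=1$). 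Since a source at capacity makes of order $\lambda T$ such attempts in a window of length $T$, I can choose the spatial block and $T$ so that a whole block of neighbours is filled up to $N$ within $T$ with probability as close to $1$ as the comparison requires. This gives survival at $\phi_A=1$, and by monotonicity it persists on an interval $[1,\phi^A_c)$ with $\phi^A_c>1$.

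For the extinction half of part (i), i.e.\ $\phi_A$ large, I would compare instead with a \emph{subcritical} oriented percolation. The crucial structural feature is that a particle can emigrate only from a site at capacity $N$, so the ``infection'' propagates only through sites that actually reach $N$. I would declare a site activated the first time it reaches $N$ and bound the number of neighbours it can in turn activate: each recipient of emigrants must again cross the Allee zone, with crossing probability $q(\phi_A)\to 0$ as $\phi_A\to\infty$, while the expected number of emigration attempts made by an activated site before its local population is absorbed at $0$ stays bounded (increasing $\phi_A$ only shortens the excursions). Hence for $\phi_A$ large the expected number of sites activated by a given one is strictly below $1$, the renormalised process is subcritical, and the comparison theorem of \cite{cf:durrett_ten} gives almost sure extinction from any finite configuration; since subcritical oriented percolation started from a full configuration dies out locally, this also yields convergence to $\delta_{\underline{0}}$ from $\OmN$.

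The main obstacle is the extinction direction, and in particular making the subcritical comparison rigorous starting from the full configuration $\OmN$: there is a large amount of initial mass, and one must show that the local density nonetheless drops to $0$. The two delicate points are (a) organising the block/renormalisation events so that their (weak) dependencies satisfy the hypotheses of the comparison theorem of \cite{cf:durrett_ten}, and (b) obtaining an upper bound on the per-site activation number that is uniform in space and in the surrounding configuration, which is precisely what makes the percolation genuinely subcritical. By contrast, the uniform lower bound on the Allee-zone crossing probability needed on the survival side is comparatively routine.
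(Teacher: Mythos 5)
Your part (ii) and the skeleton of part (i) (extinction for $\phi_A$ large, then Proposition \ref{monotonicity2} plus the definition of $\phi^A_c$ as an infimum) coincide with the paper's strategy. The problems are in how you propose to carry out the two halves of part (i).

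The extinction half is where the real work lies, and your sketch defers precisely the point that constitutes the theorem. The statement requires extinction for \emph{every} $\eta_0\in\OmNless$, and by monotonicity everything reduces to the maximal configuration $\eta_0\in\OmN$; the finite-configuration case, which is what your activation/branching count (sites activated on first hitting $N$, mean number of activated offspring $<1$) is designed for, follows from it and not conversely. Started from $\OmN$ every site is activated at time zero, so the subcritical-branching comparison does not apply as stated, and you explicitly list this as an unresolved ``main obstacle''. The paper's proof, following \cite[Theorem 4.4]{Berg}, is organized around exactly this difficulty: it introduces block processes $\eta^{m,n,k}_t$ which are assigned the worst-case value $N$ outside the space-time block and before its initial time (hence dominate $\eta_t$ for \emph{any} initial configuration, by Remarks \ref{remM1} and \ref{non_inv}), proves via Lemmas \ref{proc_sup}, \ref{lem_B1} and \ref{lem_B2} that for $\phi_A$ large the inner block is empty with high probability despite this maximal boundary condition, controls re-population by emigrations from the block boundary, and only then invokes a finite-range dependent percolation bound on paths of dry sites to produce an a.s.\ finite time $T_x$ after which each site stays empty, giving convergence to $\delta_{\underline{0}}$ and ergodicity. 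Without some version of this worst-case-boundary construction your plan does not yield the theorem. A secondary issue: your per-site offspring bound multiplies (number of emigrations before absorption) by (single-particle Allee-zone crossing probability), but the recipient receives a \emph{stream} of immigrations and several particles can sit in the zone simultaneously, so a per-particle ruin probability is not the right object; one needs an upcrossing estimate of level $N_A$ at the recipient, which is essentially what the paper's Lemmas \ref{lem_B1}--\ref{lem_B2} and estimate (\ref{dry-temp}) provide.

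The survival half is both unnecessary and, as sketched, flawed. It is unnecessary because with $\phi^A_c:=\inf\{\phi_A\geq 1:\text{extinction}\}$ monotonicity alone yields survival for all $\phi_A<\phi^A_c$ (possibly vacuously when $\phi^A_c=1$), which is all the statement asserts and all the paper proves. As for the sketch itself: first, monotonicity runs the wrong way — survival at $\phi_A=1$ implies survival for \emph{smaller} $\phi_A$, not on an interval $[1,\phi^A_c)$ above $1$; to move above $1$ you would need continuity of the finite-box block probability in $\phi_A$, not monotonicity. Second, you claim that above the Allee threshold the dynamics ``coincides with the supercritical Model I'' for any $\phi<\phi_c(\lambda,N)$ and that this supplies a supercritical block event; but the proof of Theorem \ref{phasetransition1} constructs blocks only for $\phi\leq\phi^1_c(\lambda,N)$, obtained by perturbing from $\phi=0$, and survival in the range $\phi^1_c(\lambda,N)<\phi<\phi_c(\lambda,N)$ comes purely from monotonicity, with no block construction available to borrow. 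So your claim that $\phi^A_c>1$ whenever $N_A<N$ and $\phi<\phi_c(\lambda,N)$ is not established by the proposal, and nothing in the paper establishes it either.
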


This corresponds to the biological idea that random fluctuations, which
are present on each local population, plus the Allee effect doom even a
very large population.

The phase diagram of Model II depends on $\phi_A$. Proposition \ref
{phasetransition2} is not enough to construct a detailed phase diagram,
but it gives some information in this direction. Since for any $\phi$
and $\lambda$ there exists $\phi_A$ large enough for the species to
become extinct, one can choose $\phi_A$ large enough to reduce the
survival region in the $(\lambda,\phi)$ plane of Figure \ref{phase1}
for such fixed $\phi_A$.

In order to model the Allee effect, we require $\phi_A\geq1$ and $\phi
\leq1$. Note that from a biological point of view we just need $\phi
_A>\phi$, but if either $\phi_A>\phi> 1$ or $1>\phi_A>\phi$, by
monotonicity arguments we can work as in Model I.\vadjust{\goodbreak}

From a mathematical point of view, it would be interesting to
investigate a model where $\phi$ and $\phi_A$ play symmetric roles,
that is, $\phi_A\leq1$ and $\phi\geq1$. For fixed~$N$, $N_A$ and
$\lambda$ we prove that there is no $\phi_A$ such that there is
survival for all $\phi$ and no $\phi$, such that there is extinction
for all $\phi_A$.
\begin{teo}\label{phasetransitionnew}
For all $1<N_A<N$, $\lambda>0$:
\begin{longlist}[(ii)]
\item[(i)] for each $\phi>1$ there exists a value $\phi^{A}_{c}(\lambda
,N_{A},N,\phi)$ such that, if
$\phi_{A}<\phi^{A}_{c}(\lambda
,N_A,N,\phi)$, the process survives for any initial configuration $\eta
_0$ such that $|\eta_0|\geq1$ with positive probability;
\item[(ii)] for each $\phi_A<1$ there exists a value $\phi_{c}(\lambda
,N_{A},N,\phi_A)$ such that, if $\phi>\phi_{c}(\lambda,N_A,N,\phi_A)$,
the process dies out for any initial configuration $\eta_0 \in\OmNless$.
\end{longlist}
\end{teo}

%s5 ###
\section{Model III: Mass migration as Allee effect solution}
\label{ModelIII}

We have already observed in Model I that a migration of a single
individual is good in absence of the Allee effect. The model without
migrations dies out, but if we add a~possible migration of one
individual there is a positive probability of survival. In Model II,
anyhow, a single individual migration may not be enough: even in the
supercritical region of $\phi$ in Model I there exists an Allee effect
strong enough for the species to become extinct.

Which strategy may a species adopt to reduce the Allee effect?

We show that, at least in theory, \textit{migrations of large flocks of
individuals improve the probability of survival for any Allee effect}.
A migration of many individuals in a new environment improves the
probability of a successful colonization avoiding a small density in
that new environment which is influenced by the Allee effect.

We introduce positive parameters $\phi_{A}$, $\phi$, $N_{A}$, $N$ such
that $0\leq N_{A}\leq N$, $\phi_A>1$, $\phi>0$ and we take birth and
death transitions as in Model II, but more general migration rates:
given $M \in\mathbb{N}$, $0 < M \leq N$, $y \sim x$ the transitions are
\begin{eqnarray}\label{rates3}
\eta_t(x)&\to&\eta_t(x)+1 \qquad\mbox{at rate } P^1_{\eta_t(x)}=\eta_t(x)\I
_{\{\eta_t(x)\leq N-1\}}, \nonumber\\
\eta_t(x)&\to&\eta_t(x)-1
\nonumber
\\[-8pt]\nonumber
\\[-8pt]\\[-18pt]
\eqntext{\displaystyle\mbox{at rate } P^{-1}_{\eta_t(x)}=\eta
_t(x)\bigl(\phi_{A}\I_{\{\eta_t(x) \leq
N_{A}\}}+\phi\I_{\{N_{A}<\eta_t(x)\}}\bigr),}
\\
(\eta_t(x),\eta_t(y))&\to& \bigl(\eta_t(x)-k,\eta_t(y)+k\bigr) \nonumber\\
\eqntext{\displaystyle\mbox{at rate }
\frac{1}{2d}\Gamma^k_{\eta_t(x),\eta_t(y)} =\frac{\lambda}{2d} \I_{\{\eta_t(x)-k\geq N-M, \eta
_t(y)+k\leq N\}}}
\end{eqnarray}
for $1\leq k\leq M$. In other words if $k \in\{1,2,\ldots,M\}$
individuals try to migrate from~$x$ to $y$, but if $\eta_t(y)+k>N$, the
migration does not happen. Notice that if $\eta_t(x)<N-M$ the migration
rate is null: \textit{individuals try to migrate only when there are more
than $N-M$ individuals on a site}. From a biological point of
view,\vadjust{\goodbreak}
this means that when there are few individuals, resources are enough
for all and there are no reasons to migrate. When $\eta_t(x) \geq N-M$
there is a positive probability of migration and the number of
individuals that may migrate is increasing with the population size. If
$\eta_t(x)=N-M+1$ we allow a migration of at most $1$ individual from
$x$ to a nearest neighbor site; when $\eta_t(x)=N-M+2$ we allow a
migration of either $1$ or $2$ individuals with rate $\lambda$ and so
on. If $\eta_t(x)=N=(N-M)+M$ we allow a migration of $1,2,\ldots$ to
the largest flock of $M$ individuals, where each migration occurs with
rate $\lambda$.

First of all we notice monotonicity properties.\vspace*{-2pt}
\begin{proposizione}
Let $(\xi_{t})_{t\geq0}$ and $(\eta_{t})_{t \geq0}$ be two Model
III-type processes with respective parameters $(\phi_{1}, \phi_{A,1},
\lambda,N, N_A)$ and $(\phi_{2}, \phi_{A,2}, \lambda,N, N_A)$ such
that $\phi_{1} \leq\phi_{2}$ and $\phi_{A,1} \leq\phi_{A,2}$. Then
$(\xi_{t})_{t\geq0}$ is stochastically larger than~$(\eta_{t})_{t\geq
0}$, and $(\eta_{t})_{t\geq0}$ is attractive.\vspace*{-2pt}
\label{monotonicity3}
\end{proposizione}

Corresponding Corollary \ref{monotonicity1} holds in a similar way.

In Model II we showed that a strong Allee effect dooms even a very
large population with a large migration rate. The strategy that the
species may adopt to reduce the Allee effect is to increase the number
of individuals which migrate: we prove that we can take a population
size $N$ and a maximal migration flock size $M$ large enough for the
species to survive \textit{for any Allee effect}.\vspace*{-2pt}

\begin{teo}
Let $d\geq2$. For all $\lambda>0$, $N_{A}\geq0$:
\begin{longlist}[(ii)]
\item[(i)] if $\phi< 1$ there exists $N_{c}(\phi,\lambda,N_{A})$ such
that for each $N>N_{c}(\phi,\lambda,N_{A})$, there exists $M(N_{A})$ so
that the process starting from $\eta_0$ with $|\eta_0|\geq1$ has
a~positive probability of survival for each $\phi_A<\infty$. Moreover if
$\eta_0 \in\OmN$ the process converges to a nontrivial invariant
measure for each $\phi_{A}<\infty$;
\item[(ii)] if $\phi\geq1$, the process becomes extinct for all $N$,
$\lambda$, $\phi_{A}>1$, $M$ and for any finite initial configuration.
If $\eta_0 \in\OmNless$ is not finite the process becomes extinct if
$\phi>1$.\vspace*{-2pt}
\end{longlist}
\label{phasetransition3}
\end{teo}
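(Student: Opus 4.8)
The plan is to treat the two regimes separately, using an elementary branching-process domination for part ii) and a supercritical oriented percolation comparison for part i).

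For part ii), I would first note that when $\phi\geq 1$ and $\phi_A>1$ the death rate dominates the birth rate site by site: at a site holding $n$ particles the birth rate is $n\I_{\{n<N\}}\leq n$, whereas the death rate is $n\phi_A\geq n$ if $n\leq N_A$ and $n\phi\geq n$ if $n>N_A$. Since a migration only relocates particles and conserves their total number, the total population $|\eta_t|$ is dominated from above by a critical linear birth-death chain (per-capita birth rate $1$, per-capita death rate $1$), which hits $0$ almost surely; hence for any finite $\eta_0$ the species gets extinct, including the critical case $\phi=1$. For a non-finite $\eta_0\in\OmNless$ with $\phi>1$ I would bound the first moment instead: writing $u(t)=\mathbb{E}[\eta_t(0)]$ and using translation invariance (the migration fluxes cancel in the mean), the generator yields $\frac{d}{dt}u(t)\leq(1-\min(\phi,\phi_A))\,u(t)$, so $u(t)$ decays exponentially because $\min(\phi,\phi_A)>1$. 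Exponential decay of the occupation density, together with the domination of $(\eta_t)$ by a family of subcritical branching random walks (so that only finitely many lineages ever visit a fixed finite $S$, each dying out), gives local extinction, i.e. convergence to $\delta_{\underline{0}}$ in the required sense.

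For part i) the strategy is a block construction comparing the process with supercritical oriented percolation, in the spirit of \cite{cf:durrett_ten} and \cite{cf:schi_aggr} (this is where $d\geq 2$ enters, the good sites being placed on a two-dimensional oriented lattice embedded in $\mathbb{Z}^d$). Fix $\phi<1$, $N_A$ and $\lambda$, and choose the maximal flock size $M=M(N_A)>N_A$, so that a single migration can deposit more than $N_A$ particles on a neighbour. I would call a site \emph{good} when it is full (state $N$) and design a space-time block event guaranteeing that a good site colonizes its relevant neighbours (brings them to state $N$) within a fixed window $T$ while itself staying above the Allee threshold $N_A$. Two estimates are needed. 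First, starting from state $N$ a site performs a supercritical reflected birth-death walk and stays above $N_A$ throughout $[0,T]$ with probability tending to $1$ as $N\to\infty$, since a downward excursion of depth $N-N_A$ has probability of order $\phi^{N-N_A}$. Second, during $[0,T]$ a good site fires flocks of size $>N_A$ at positive rate, and a flock deposited at level $N_A+1$ grows to $N$ before re-entering the Allee zone with probability bounded below by $\frac{1-\phi}{1-\phi^{\,N-N_A}}\geq 1-\phi$, a gambler's ruin estimate for the biased embedded walk (up-probability $1/(1+\phi)$) that is uniform in $N$.

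Since a single flock colonizes with only a fixed positive probability $\approx 1-\phi$, the crux is to iterate: over the window the number of attempts is large, so the probability that at least one succeeds for each required neighbour can be pushed above $1-\epsilon$. Concretely I would fix the failure level $\epsilon$ dictated by the comparison theorem, choose $T$ large (to boost colonization by repeated attempts), and only then choose $N_c(\phi,\lambda,N_A)$ so large that for $N>N_c$ the source-survival estimate and the gambler's ruin probabilities are within $\epsilon$ of their limits. Unaccounted interactions between source and colonized sites are discarded by passing to a stochastically smaller, non-translation-invariant process (legitimate by Remark \ref{non_inv}), so the block events depend only on finitely many Poisson marks in a bounded space-time region and enjoy the finite-range dependence the comparison requires; supercriticality then yields survival with positive probability from any $\eta_0$ with a full site (a preliminary step fills one site with positive probability). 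For $\eta_0\in\OmN$, attractiveness (Proposition \ref{monotonicity3}) makes the law started from the maximal configuration stochastically non-increasing, hence convergent to an invariant measure, and the uniform-in-time positive density of good sites from the percolation bound shows this limit charges $\{\eta(0)\geq 1\}$, so it is non-trivial. The main obstacle is precisely this tension: colonization per attempt is not close to $1$, so one must combine many attempts over a long window while simultaneously forcing, via large $N$, the source to avoid the Allee zone for that whole window, and then verify that the two large-parameter choices ($T$ first, $N$ second) are mutually compatible.
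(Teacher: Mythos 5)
Your part ii) is correct and is essentially the paper's argument: migration conserves the total particle number, so $|\eta_t|$ is dominated by a (sub)critical linear birth--death chain, and for infinite initial configurations the first-moment/Gronwall computation gives extinction. The genuine problem is in part i), and it is not the point you flag as "the main obstacle" (the order of the choices of $T$ and $N$) but the percolation iteration itself. Your block event takes as input a site \emph{at level $N$ at the start of a window} and produces as output a neighbour that \emph{reaches level $N$ at some random time inside the window}, with source and neighbour afterwards guaranteed only to stay above $N_A$. This output does not reproduce the input of the next block. At any fixed later time, the probability that a site which has reached $N$ sits exactly at $N$ is only of order $1-\phi$ (near the top the level makes down-steps at relative rate $\phi$, so the quasi-stationary weight of the top state is $\approx 1-\phi$), and a site sitting just above $N_A$ climbs back to $N$ before touching $N_A$ only with probability $\approx 1-\phi$, by your own gambler's ruin bound. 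Neither probability can be pushed to $1-\epsilon$ by taking $T$ and then $N$ large, so the comparison with supercritical oriented percolation does not close as stated. A secondary soft spot: your per-attempt estimate $\frac{1-\phi}{1-\phi^{N-N_A}}$ treats the colonized site as a pure birth--death chain all the way up to $N$, but above level $N-M$ the target site also fires emigrations (at rate up to $M\lambda$), which your embedded walk ignores.

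This mismatch is exactly what the paper's construction is designed to avoid. The paper uses no time windows at all: following \cite{cf:kuulasmaa} and \cite{cf:schi_aggr}, it builds a static locally dependent directed graph on $\mathbb{Z}^2$, declaring the edge $[x,y\rangle$ open if, in a modified process confined to $[x-1,x+1]^2$ (legitimate by Remark \ref{non_inv}) and started with exactly $N-M$ particles at $x$, site $y$ \emph{eventually} reaches level $N-M$. The colonization target is $N-M$, not $N$: this is simultaneously the restart level, so an open edge literally reproduces its own hypothesis at $y$ and the iteration closes with no constraint on when colonization occurs, and it is low enough that the deposited flock's climb involves no emigrations, so the transience/ruin estimate is clean. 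The edge probability is then pushed to $1-\epsilon$ by taking $N$ large through the chain of lemmas ($N^3$ visits to $N-M+1$, hence $N^2$ visits to $N$, hence $N^{1/2}$ migrations of $M>N_A$ particles, hence at least one successful colonization since $\phi<1$); the hypothesis $d\geq 2$ is needed precisely because this percolation lives in space only. Your scheme could be repaired by redefining ``good'' as lying above a level $L^*$ with both $N-L^*\to\infty$ and $L^*-N_A\to\infty$, and requiring the block event to leave the source and the colonized neighbours above $L^*$ at the terminal time of the window; but as written, the iteration step is a gap.
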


\begin{remark}
The proof of (i) (see Section \ref{proofs}) states that in order to
have survival we can take $M(N_{A})=N_{A}+1$. If $N_{A}=0$, this gives
$M(N_{A})=N_{A}+ 1=1$; only a migration of one individual is possible
and the process reduces to a Model I-type process: therefore Theorem
\ref{phasetransition1b} is a particular case of Theorem \ref{phasetransition3}.\vspace*{-2pt}
\end{remark}

Notice that $N_c(\phi,\lambda,N_A)$ does not depend on $\phi_A$. This
means that even if the Allee effect is the strongest one, if the
species lives and migrates in flocks large enough, survival is
possible.

Since there are many parameters the phase diagram is not easy to
construct; nevertheless Proposition \ref{phasetransition3} suggests
that one can choose $N$ and $M$ large enough to extend the survival
region in the $(\lambda,\phi)$-plane for fixed $\phi_A$,~$N$
and~$M$.\vadjust{\goodbreak}

%s6 ###
\section{Model IV: Ecological equilibrium}
\label{ModelIV}

Real natural environments do not have any a priori bound on the
population size, but there is a kind of \textit{self-regulating
mechanism} that does not allow an ``explosion'' of the number of
individuals per site. \textit{Ecological equilibrium} has been introduced
in \cite{cfBertacchi} for restrained branching random walks (on a
connected, nonoriented graph $X$ with bounded geometry) with
transition rates
\begin{eqnarray*}
\eta(x)&\to&\eta(x)+1 \qquad \mbox{at rate } \sum_{y}\eta(y)p(y,x)c(\eta
(x)),\\
\eta(x)&\to&\eta(x)-1 \qquad \mbox{at rate } \eta(x),
\end{eqnarray*}
where $c\dvtx \mathbb{N}\to\mathbb{R}^+$ is a~nonincreasing function and
$P=(p(x,y))_{x,y \in X}$ is a~sto\-chastic matrix such that $p(x,y)>0$
only if $x\sim y$. The idea is that some restrictions on branching
random walks birth rates, given by the nonincreasing function $c(\cdot
)$ of the number of individuals, provide survival within nonexploding
populations. In particular, one interesting consequence of \cite{cfBertacchi}, Proposition 1.1,
is that one can find a function $c$
such that the process survives but $\limsup_{t\to\infty}\mathbb
{E}^{\eta_0}(\eta_t(x))<\infty$ uniformly for any bounded $\eta_0 \in
\Omega$ and $x\in X$.

We show that a similar mechanism leads to a similar conclusion on
different systems. Instead of taking births on neighboring sites as in
\cite{cfBertacchi}, we consider a nonincreasing birth rate in the
same local population, but we add migrations when the number of
individuals is larger than a fixed value~$N$. This means that the
restriction on birth rate does not change the migration rate: this is
not the case for the restrained branching random walk, where births in
a new site (which play the same role as migrations in Model~IV) depend
on the local population density.

We suppose that in our environment there is no maximal population size
as in previous models, and the birth rate is always positive. We also
assume that, when the population size is larger than $N$, the death
rate increases faster than the birth rate, hence the growth rate is
negative.

In order to simplify notation and proofs, we work on a modification of
Model I. Namely, given positive real values $\phi$, $\widetilde{\phi}$,
we take the following transitions, for each $x \in S$, $y \in S$,
$x\sim y$:
\begin{eqnarray}\label{rates4}
\eta_t(x)&\to&\eta_t(x)+1 \qquad\mbox{at rate } P^1_{\eta_t(x)}=\eta
_t(x),\nonumber\\
\eta_t(x)&\to&\eta_t(x)-1
\nonumber
\\[-8pt]\nonumber
\\[-8pt]\\[-18pt]
\eqntext{\displaystyle \mbox{at rate }P^{-1}_{\eta_t(x)}=\eta
_t(x)\bigl(\phi\I_{\{\eta_t(x)\leq N\}}+\Dphi\I_{\{N<\eta_t(x)\}}\bigr),}
\\
(\eta_t(x),\eta_t(y))&\to& \bigl(\eta_t(x)-1,\eta_t(y)+1\bigr) \nonumber\\
\eqntext{\displaystyle\mbox{at rate }
\frac{1}{2d}\Gamma^{1}_{\eta_t(x),\eta_t(y)}=\frac{\lambda}{2d}\I_{\{\eta_t(x)\geq N, \eta_t(y)<N\}}.}
\end{eqnarray}
This means that when the population size $\eta_t(x)$ is larger than
$N$, and the death rate $\widetilde{\phi}\eta_t(x)$ is larger than the
birth rate $\eta_t(x)$. A migration is allowed from a site with more
than $N$ individuals to a site with less than~$N$ individuals. Since we
are working without any a priori bound, we refer to construction
techniques in noncompact cases, and we restrict the state space to
$\widetilde{\Omega}\subseteq\Omega$ (see \cite{cfChenbook}, Chapter~13), where
\[
\widetilde{\Omega}:=\biggl\{\eta\in\Omega\dvtx  \sum_{x \in\mathbb{Z}^d}\eta
(x)\alpha(x)<\infty\biggr\},
\]
and $(\alpha(x))_{x \in\mathbb{Z}^d}$ is a positive sequence such that
$\sum_{x\in\mathbb{Z}^d}\alpha(x)<\infty$. Sufficient conditions for
existence and uniqueness of the process given in \cite{cfChenbook}, Chapter~13, are satisfied:
\begin{lemma}
There exists a unique Markov process with state space $\widetilde{\Omega
}$, generator (\ref{generator}) and rates (\ref{rates4}).
\label{existence}
\end{lemma}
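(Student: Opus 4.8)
The plan is to show that the process defined by generator (\ref{generator}) with rates (\ref{rates4}) is a reaction--diffusion process in the sense of \cite[Chapter 13]{cf:Chenbook} and then to verify the sufficient conditions for existence and uniqueness stated there. First I would split the dynamics into its \emph{reaction} part, the on-site birth and death transitions, whose rates $P^1_{\eta(x)}=\eta(x)$ and $P^{-1}_{\eta(x)}=\eta(x)\big(\phi\I_{\{\eta(x)\le N\}}+\Dphi\I_{\{N<\eta(x)\}}\big)$ depend only on the local coordinate $\eta(x)$, and its \emph{diffusion} part, the migration of a single particle to a nearest neighbour at the bounded rate $\tfrac{\lambda}{2d}\I_{\{\eta(x)\ge N,\eta(y)<N\}}\le \tfrac{\lambda}{2d}$. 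Two structural facts make the verification clean: the reaction rates grow only \emph{linearly} in the local occupation number, since $P^1_{\eta(x)}+P^{-1}_{\eta(x)}\le \big(1+\max(\phi,\Dphi)\big)\eta(x)$, and the migration rate is uniformly bounded.

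The core of the argument is a Lyapunov estimate. I would take $V(\eta)=\sum_{x\in\Zd}\eta(x)\alpha(x)$, which is finite exactly on $\widetilde{\Omega}$, choosing the weight $(\alpha(x))_x$ not merely summable but also \emph{regular}, meaning $\sum_{y\sim x}\alpha(y)\le M\alpha(x)$ for some constant $M$ and all $x$ (e.g. $\alpha(x)=c^{|x|}$ with $c\in(0,1)$ small); such $\alpha$ lie in the admissible class for which the theorem of \cite[Chapter 13]{cf:Chenbook} applies. Applying $\mathcal{L}$ to $V$, the birth term contributes $\sum_x\eta(x)\alpha(x)=V(\eta)$, the death term is nonpositive, and each migration changes $V$ by $\alpha(y)-\alpha(x)$ at rate at most $\tfrac{\lambda}{2d}$ and only from sites with $\eta(x)\ge N\ge 1$, so that $\I_{\{\eta(x)\ge N\}}\le \eta(x)/N$. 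Summing over edges and using regularity of $\alpha$ bounds the migration contribution by $\tfrac{\lambda M}{2dN}V(\eta)$, whence $\mathcal{L}V\le C\,V$ with $C=1+\tfrac{\lambda M}{2dN}$. This linear bound is precisely the non-explosion estimate required, and it simultaneously shows that $\widetilde{\Omega}$ is invariant and that the weighted first moment stays finite at all finite times.

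With the linear growth of the reaction rates, the boundedness of the migration rate, and the bound $\mathcal{L}V\le CV$ in hand, I would invoke the existence-and-uniqueness theorem of \cite[Chapter 13]{cf:Chenbook}: the finite-volume truncations obtained by restricting the dynamics to boxes $\Lambda_n\uparrow\Zd$ are well defined Markov chains that do not explode thanks to the same estimate, they form a tight monotone family, and their limit is the unique Markov process solving the martingale problem for $\mathcal{L}$ on $\widetilde{\Omega}$; uniqueness follows from the Lipschitz-type control provided by the linear bounds on the rates together with the moment estimate.

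The step I expect to be the main obstacle is the Lyapunov computation for the \emph{migration} part rather than the reaction part: the on-site birth--death process has linear rates and never explodes in finite time, but migration couples neighbouring coordinates, so one must control $\alpha(y)-\alpha(x)$ for $y\sim x$ and reconcile the bounded migration rate with the summability and regularity demanded of $\alpha$. Once $\alpha$ is chosen in the admissible regular class and the inequality $\I_{\{\eta(x)\ge N\}}\le\eta(x)/N$ is used to turn the bounded migration rate into a term linear in $V$, the remaining hypotheses of \cite[Chapter 13]{cf:Chenbook} are routine to check.
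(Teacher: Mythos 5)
Your proposal is correct and follows essentially the same route as the paper: both decompose the generator into an on-site reaction part and a migration (diffusion) part, observe that transitions involve boundedly many particles, that the reaction rates grow linearly and the migration rate is bounded, and then invoke the existence--uniqueness theorems for reaction--diffusion processes in \cite[Chapter 13]{cf:Chenbook}. The only difference is presentational: you carry out explicitly the Lyapunov estimate $\mathcal{L}V\le CV$ for $V(\eta)=\sum_x\eta(x)\alpha(x)$ with a regular weight $\alpha$, which is precisely the verification the paper compresses into the statement that the hypotheses of Chen's Theorems 13.17 and 13.19 are satisfied.
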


Since births, deaths and migrations involve only one particle and the
migration rate is nondecreasing in $\eta_t(x)$ and nonincreasing in
$\eta_t(y)$ the process is attractive as in Model I, and a monotonicity
property (see Proposition~\ref{monotonicity1})\vspace*{1pt} holds in $\phi$ and
in~$\widetilde{\phi}$ for each initial configuration $\eta_0\in\widetilde
{\Omega}$. We prove that in some cases the process survives but does
not explode; that is, it does not die out, and the expected value on
each site is finite.
\begin{teo}
Let $\eta_0\in\Omega_n$ for some $n \in\mathbb{N}$ (so that $\eta_0
\in\widetilde{\Omega}$). For all $\lambda>0$, $\Dphi>1$:
\begin{longlist}[(ii)]
\item[(i)] for each $1<N<\infty$ there exists a critical value $\phi
_{c}(\lambda,N,\Dphi)>0$ such that if $\phi< \phi_{c}(\lambda,N,\Dphi
)$, the process has a positive probability of survival, and if $\phi>
\phi_{c}(\lambda,N,\Dphi)$ the process dies out;
\item[(ii)] for each $\phi<1$ there exists a value $N_{c}(\lambda,\phi
,\Dphi)>0$, such that if $N > N_{c}(\lambda,\phi,\Dphi)$, the process
has a positive probability of survival.

If the process survives, there exists $C_n<\infty$ so that
$ \lim_{t \to\infty}\mathbb{E}(\eta_{t}(x))\leq C_n$ for
each $x\in\mathbb{Z}^{d}$.
\end{longlist}
\label{phasetransition4}
\end{teo}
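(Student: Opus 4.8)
The plan is to sandwich Model IV between two tractable processes — a capacity-$N$ process of Model~I type from below and a linear (branching-random-walk) process from above — and to read off the critical $\phi_c(\lambda,N,\Dphi)$ from the one-sided bounds together with monotonicity in $\phi$; the expectation bound will come from a direct drift estimate on the generator. First I would establish the lower comparison: the Model~IV process $(\eta_t)_{t\ge0}$ is stochastically larger than the Model~I process $(\xi_t)_{t\ge0}$ with the same $\phi,\lambda,N$ started from a configuration $\xi_0\le\eta_0$ (e.g. a single seed, or $\eta_0$ truncated at level $N$). Both are single-particle-jump systems, so by Remark~\ref{remM1} it suffices to verify (\ref{C+1})--(\ref{C-10}). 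The increase conditions (\ref{C+1})--(\ref{C+10}) hold because Model~IV has the larger, uncut birth rate $\eta_t(x)$ and fires migrations on the larger event $\{\eta_t(x)\ge N\}$; the decrease conditions (\ref{C-1})--(\ref{C-10}) are imposed only when the two heights coincide, $\gamma=\alpha$, and there $\alpha=\xi(x)\le N$ forces $\gamma\le N$, where the two per-capita death rates agree (both equal $\phi$), so they hold as well. This is the decisive point: the extra killing $\Dphi$ above level $N$ never interferes with the order because it only acts where the dominated process cannot reach. Granting this, part~(ii) follows by taking $\phi<1$ and $N>N_c(\lambda,\phi)$ so that the Model~I process survives by Theorem~\ref{phasetransition1b}; and the survival half of part~(i) follows by fixing $N$ and taking $\phi<\phi_c(\lambda,N)$, where Model~I survives (and from $\OmN$ converges to a non-trivial invariant measure) by Theorem~\ref{phasetransition1}. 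Hence Model~IV survives in the same regimes.

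For the upper comparison I would introduce the branching random walk $(\zeta_t)_{t\ge0}$ with birth rate $\zeta_t(x)$, constant per-capita death rate $\kappa:=\min(\phi,\Dphi)$, and the same migration rule, and check (\ref{C+1})--(\ref{C-10}) exactly as above to conclude $\eta_t\le\zeta_t$ (the per-capita death rate of Model~IV is everywhere at least $\kappa$). Starting from the translation-invariant $\eta_0\in\Omega_n$, the law of $\zeta_t$ remains translation invariant, so by the symmetry $x\leftrightarrow y$ the expected net migration at each site vanishes and $m_t:=\mathbb{E}(\zeta_t(x))$ solves $m_t'=(1-\kappa)m_t$, i.e. $m_t=n\,e^{(1-\kappa)t}$. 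When $\phi>1$ we have $\kappa>1$, so $\mathbb{E}(\eta_t(x))\le\mathbb{E}(\zeta_t(x))\to0$ for every $x$, forcing $\eta_t$ to converge to $\delta_{\underline{0}}$; thus the species gets extinct for $\phi>1$. Combining the survival region $\phi<\phi_c(\lambda,N)$ with this extinction and with the stochastic monotonicity of $(\eta_t)$ in $\phi$ recorded before the statement, the set of $\phi$ for which the process does not converge to $\delta_{\underline{0}}$ is a down-set, so $\phi_c(\lambda,N,\Dphi):=\sup\{\phi:\text{survival}\}\in[\phi_c(\lambda,N),1]$ is well defined and yields the claimed dichotomy.

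For the final assertion I would apply the generator to $f(\eta)=\eta(x)$. A direct computation gives
$$\mathcal{L}\eta(x)=\eta(x)\bigl(1-\phi\I_{\{\eta(x)\le N\}}-\Dphi\I_{\{\eta(x)> N\}}\bigr)+R(\eta,x),$$
where the net migration term $R(\eta,x)$ satisfies $|R(\eta,x)|\le\lambda$. Setting $a:=\Dphi-1>0$ and $b:=(\Dphi-1)N+N\max\{0,1-\phi\}+\lambda$, one checks $\mathcal{L}\eta(x)\le-a\,\eta(x)+b$ separately on $\{\eta(x)\le N\}$ and on $\{\eta(x)>N\}$. After justifying $\frac{d}{dt}\mathbb{E}(\eta_t(x))=\mathbb{E}(\mathcal{L}\eta_t(x))$ through the $\widetilde\Omega$-construction of Lemma~\ref{existence} (a truncation and monotone-limit argument), Grönwall's inequality gives $\mathbb{E}(\eta_t(x))\le\max\{n,b/a\}=:C_n$ for all $t$ and $\limsup_{t\to\infty}\mathbb{E}(\eta_t(x))\le b/a<\infty$, which is the ecological-equilibrium bound; note it uses $\Dphi>1$ in an essential way and holds whether or not the process survives.

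I expect the genuine difficulties to be analytic rather than combinatorial. The combinatorial checks (\ref{C+1})--(\ref{C-10}) are routine once the ``equal heights stay below $N$'' observation is made. The hard part is making the non-compact arguments rigorous: the existence of $\zeta$ and the interchange of $\frac{d}{dt}$ with $\mathbb{E}$ on the unbounded state space, and the upgrade of $\mathbb{E}(\eta_t(x))\to0$ to convergence to $\delta_{\underline{0}}$ in the precise sense of the definition. Both are controlled by the restriction to $\widetilde\Omega$ and the attractiveness of the system, but this is where the care must be concentrated.
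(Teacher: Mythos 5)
Your proposal is correct, and for the two phase--transition statements it is essentially the paper's own proof: the paper establishes exactly your lower comparison as a lemma (Model IV with parameters $(\phi,\Dphi,\lambda)$ dominates a Model I process with the same $\phi,\lambda,N$, verified through Remark \ref{remM1} with the same decisive observation that conditions (\ref{C-1})--(\ref{C-10}) only bind when the heights coincide, which forces both below $N$ where the death rates agree), then quotes Theorem \ref{phasetransition1} for the survival half of (i), Theorem \ref{phasetransition3} (equivalently \ref{phasetransition1b}) for (ii), repeats Step (ii) of the proof of Theorem \ref{phasetransition1} for extinction when $\phi>1$, and gets the critical value from monotonicity in $\phi$ --- the same three ingredients in the same order as yours.

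Where you genuinely diverge is the final expectation bound. The paper dominates Model IV by an auxiliary system with $N$ \emph{immortal} particles per site: birth rate $\eta^N(x)$, death rate $\Dphi\,\eta^N(x)\I_{\{\eta^N(x)>N\}}$, and \emph{no migration}. Since that process never drops below $N$ while immigration in Model IV only occurs into sites strictly below $N$, the order is preserved, and each site of the dominating process is then a genuine single-site birth-and-death chain, for which the interchange of $\frac{d}{dt}$ and $\mathbb{E}$ is classical. You instead prove the Lyapunov inequality $\mathcal{L}\eta(x)\le -(\Dphi-1)\eta(x)+b$ directly on the interacting generator, absorbing the migration terms into $|R(\eta,x)|\le\lambda$; this is shorter, dispenses with the auxiliary domination (which the paper leaves implicit and which itself needs the remark above to be justified), and in fact yields the stronger conclusion $\limsup_{t\to\infty}\mathbb{E}(\eta_t(x))\le b/(\Dphi-1)$ \emph{independently of} $n$, whereas the paper only states a constant $C_n$ depending on the initial configuration. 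The price is exactly where you located it: the identity $\frac{d}{dt}\mathbb{E}(\eta_t(x))=\mathbb{E}(\mathcal{L}\eta_t(x))$ must be justified for the full non-compact interacting system via the construction of Lemma \ref{existence}, a step the paper's single-site reduction sidesteps entirely.

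One small correction to your extinction step: $\Omega_n$ is the set of configurations bounded by $n$, not the constant configuration, so $\eta_0\in\Omega_n$ need not be translation invariant and the cancellation of the net migration term in $\frac{d}{dt}\mathbb{E}(\zeta_t(x))$ is not automatic. This is repaired with a tool you already use: dominate $\eta_0$ by the constant configuration equal to $n$, run the drift argument for that translation-invariant initial state, and conclude by the monotonicity stated before the theorem. The paper's appeal to Step (ii) of Theorem \ref{phasetransition1} needs the same (equally implicit) reduction.
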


Note that the constant $C_n$ depends on the initial configuration.
Since the migration rate does not depend on the local population
density, we are not able to find such a constant $C$ independent of the
initial configuration, which was the case for the model treated in \cite
{cfBertacchi}.
\begin{remark}
In a similar way one can consider a Model III-type process without any
a priori bound by adding a death rate $\widetilde{\phi}\eta_t(x)$ when
the number of\vadjust{\goodbreak} individuals in a local population is larger than $N$. By
comparison arguments, even if a strong Allee effect is present, a mass
migration of large flocks of individuals leads to the survival of the
species, but the local populations do not explode.
\end{remark}

%s7 ###
\section{Proofs}
\label{proofs}

We first recall a classical result involving random walks on a~finite
interval. Let $r_1, r_2 \in\mathbb{N}$ and $(X_{t})_{t\geq0}$ be a
discrete time random walk on $\{r_1,r_1+1, \ldots,r_2=r_1+n\}$ such that
\begin{eqnarray*}
i&\to& i+1 \mbox{ with probability }p, \qquad  i \in\{r_1,\ldots,r_2-1\}
,\\
i&\to& i-1 \mbox{ with probability }q, \qquad  i \in\{r_1+1,\ldots,r_2\}.
\end{eqnarray*}
We interpret this random walk as a game which ends when $X_t$ reaches
either~$r_1$ or~$r_2$, that we call respectively the ruin of the first
and the second players.

\begin{lemma}[{(Ruin Problem Formula, \cite{bookschinazi}, (4.4), Section~I.4)}]
Let $P_{r_2}(j)$ [resp., $P_{r_1}(j)$] be the
probability that the random walk starting at $j \in\{r_1+1, \ldots,
r_2-1\}$ reaches state $r_2$ before state $r_1$ (resp., state $r_1$
before $r_2$). Then
\[
1-P_{r_1}(j)=P_{r_2}(j)=\frac{1-(q/p)^{j-r_1}}{1-(q/p)^{n}}.
\]
\label{ruinlemma}
\end{lemma}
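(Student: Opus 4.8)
The plan is to turn the probabilistic question into a boundary-value problem for a linear difference equation by one-step conditioning, and then to solve that equation explicitly. Write $u_j:=P_{r_2}(j)$ for $r_1\leq j\leq r_2$, with the natural boundary values $u_{r_1}=0$ and $u_{r_2}=1$. Conditioning on the first step of the walk at an interior site $j$ (which moves to $j+1$ with probability $p$ and to $j-1$ with probability $q$, where $p+q=1$) and invoking the Markov property, I would obtain the recurrence
$$
u_j=p\,u_{j+1}+q\,u_{j-1},\qquad r_1<j<r_2.
$$
This step carries the whole probabilistic content; everything after it is algebra.

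To solve the recurrence I would use $p+q=1$ to rewrite it as $p(u_{j+1}-u_j)=q(u_j-u_{j-1})$, so that the first differences $d_j:=u_{j+1}-u_j$ obey $d_j=(q/p)\,d_{j-1}$ and hence $d_j=(q/p)^{\,j-r_1}d_{r_1}$. Telescoping the identity $u_j-u_{r_1}=\sum_{i=r_1}^{j-1}d_i$ and using $u_{r_1}=0$ gives, for $p\neq q$,
$$
u_j=d_{r_1}\sum_{k=0}^{j-r_1-1}(q/p)^k=d_{r_1}\,\frac{1-(q/p)^{\,j-r_1}}{1-(q/p)}.
$$
Imposing the remaining boundary condition $u_{r_2}=1$ (i.e.\ $j=r_2$, with $r_2-r_1=n$) forces $d_{r_1}=(1-q/p)/(1-(q/p)^n)$, and substituting back yields exactly
$$
P_{r_2}(j)=u_j=\frac{1-(q/p)^{\,j-r_1}}{1-(q/p)^n}.
$$

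It then remains to establish the first equality $1-P_{r_1}(j)=P_{r_2}(j)$, which amounts to showing the game ends almost surely so that the two absorption events partition the probability space. Since the state space $\{r_1,\ldots,r_2\}$ is finite and at every interior site the walk has probability at least $\min(p,q)>0$ of stepping toward a prescribed endpoint, from any interior start there is a uniform positive lower bound on the chance of hitting $\{r_1,r_2\}$ within $n$ steps; a standard geometric-tail estimate then forces absorption in finite time with probability one. Consequently $P_{r_1}(j)+P_{r_2}(j)=1$, giving the stated relation.

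There is no substantive obstacle here: this is the classical gambler's-ruin formula and the argument above is routine. The only points deserving a word of care are the standing assumption $p+q=1$, which is what makes the one-step relation a genuine probabilistic decomposition, and the restriction $p\neq q$, which is precisely what keeps the geometric sum nondegenerate and the displayed formula meaningful. In the symmetric case $p=q$ the same telescoping instead produces the linear answer $(j-r_1)/n$; since the paper invokes the formula only in the biased regime, I would simply record these hypotheses and present the computation as above.
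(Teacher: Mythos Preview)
Your proof is correct and is the standard derivation of the gambler's ruin formula via first-step analysis and telescoping of the geometric differences. Note, however, that the paper does not supply its own proof of this lemma: it is simply quoted as a classical result from \cite[(4.4), Section I.4]{bookschinazi}, so there is nothing in the paper to compare against beyond observing that your argument is exactly the textbook one being cited.
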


%s7.1 ###
\subsection{Model I}
%s7.1.1 ###
\subsubsection{\texorpdfstring{Proof of Proposition \protect\ref{monotonicity0}}{Proof of Proposition 3.1}}

We prove that if $\eta_{0} \leq\xi_{0}$, then $\eta_{t} \leq\xi_{t}$
for each $t>0$ a.s. This is an application of Theorem \ref{cns}; since
there is a~change of at most one particle per time, we check conditions
in Remark \ref{remM1}. The transition rates are given by (\ref
{rates1}), with $\phi=\phi_2$ for the process $(\eta_{t})_{t\geq0}$
and $\phi=\phi_1$ for $(\xi_{t})_{t\geq0}$. Conditions (\ref{C+1}) and
(\ref{C+10}) are the following: given $\eta\leq\xi$, if $\eta(y)=\xi
(y)$ and $\eta(x)\leq\xi(x)$
\begin{eqnarray*}
\eta(y)\I_{\{\eta(y)\leq
N-1\}}+\lambda\I_{\{\eta(x)=N,\eta(y)<N\}}&\leq&
 \xi(y)\I_{\{\xi(y)\leq N-1\}}+\lambda\I_{\{\xi(x)=N,\xi(y)<N\}},\\
\eta(y)\I_{\{\eta(y)\leq N-1\}}&\leq& \xi(y)\I_{\{\xi(y)\leq N-1\}}.
\end{eqnarray*}
Since $\eta\leq\xi$ and $\eta(x)=N$ imply $\xi(x)=N$, and since $\I_{\{
\eta(x)=N, \eta(y)<N\}}\leq\I_{\{\xi(x)=N, \xi(y)<N\}}$ if $\eta(y)=\xi
(y)$, the conditions are satisfied.

Conditions (\ref{C-1}) and (\ref{C-10}) are the following: if $\eta
(x)=\xi(x)$ and $\eta(y)\leq\xi(y)$,
\begin{eqnarray*}
&&\phi_{2}\eta(x)\I_{\{\eta(x)\leq N-1\}}+\lambda\I_{\{\eta(x)=N,\eta
(y)<N\}}\\
&&\qquad\geq \phi_{1}\xi(x)\I_{\{\xi(x)\leq N-1\}}+\lambda\I_{\{\xi
(x)=N,\xi(y)<N\}},\\
&&\phi_{2}\eta(x)\I_{\{\eta(x)\leq N-1\}}\geq\phi_{1}\xi(x)\I_{\{\xi
(x)\leq N-1\}},
\end{eqnarray*}
which hold since $\phi_2\geq\phi_1$ and $\I_{\{\eta(x)=N,\eta(y)<N\}
}\geq\I_{\{\xi(x)=N, \xi(y)<N\}}$, because $\eta(x)=\xi(x)$.

%s7.1.2 ###
\subsubsection{\texorpdfstring{Proof of Theorem \protect\ref{phasetransition1}}{Proof of Theorem 3.2}}
\label{firstproof}

We prove it in three steps. In Step (i) we find $\phi^1_c(\lambda,N)$
small enough to have survival; in Step (ii)
we prove that the process dies out for all $\lambda, N$ by taking $\phi
\geq1$ if it starts from a finite initial configuration and by taking
$\phi>1$ it it starts from $\eta_0 \in\OmN$, and in Step (iii) we
get the existence of a critical parameter by monotonicity.

(i) We follow the idea in \cite{cfschivirus} by using the
comparison technique with oriented percolation (introduced in \cite
{cfBramsonDurrett}) explained in \cite{cfdurrettten}. Here and in
the subsequent proofs we think of the process as being generated by the
graphical representation; see \cite{cfdurrettten} for such a
construction. Suppose $d=2$. The proof in higher dimension is similar,
but the notation is more complicated. Denote~by
\begin{equation}
\qquad\cases{
e_{1}=(1,0), &\quad $\mathcal{N}=\{(m,n)\in\mathbb{Z}^2\dvtx  m+n \mbox{ is
even}\},$\vspace*{1pt}\cr
B=(-4L,4L)^{2}\times[0,T], & \quad
$B_{m,n}=(2mLe_{1},nT)+B,$\vspace*{1pt}\cr
I=[-L,L]^{2},& \quad$I_{m}=2mLe_{1}+I,$}
\label{blocks1}
\end{equation}
where $L$ and $T$ are integers to be chosen later. In other words
$B_{m,n}$ is the cube that we get by applying a translation of
$(2mLe_{1},nT)$ to $B$ and~$I_{m}$ the square we get by applying a
translation of $2mLe_{1}$ to $I$. Roughly speaking, the idea consists
of constructing boxes large enough so that with large probability the
species survives inside a box, and then to compare this evolution with
an oriented percolation model.

Let $(\eta_{t})_{t\geq0}$ be the process defined by generator (\ref
{generator}) with rates (\ref{rates1}). We consider a modification $\eta
^{m,n}_t$ of $\eta_t$: the process $(\eta^{m,n}_t)_{t\geq0}$ is
constructed through the graphical representation of $\eta_t$ in
$B_{m,n}$, but $\eta_t(x)=0$ for all~$x \notin B_{m,n}$ and $t\geq0$.
%Namely at each $x \in B_{m,n}$ we associate family of Poisson
%processes $\{b^{i,x}_n\}_{n \geq1}$ with rate $1$ for $i=1,
Let $m_{x,y}$ with $y \sim x$ be the Poisson process with rate $\lambda
/(2d)$ associated to a migration from $x$ to $y$. A migration from $x$
belongs to the graphical construction in $B_{m,n}$ if $x \in B_{m,n}$:
therefore an immigration to~$B_{m,n}$ from a site $y \notin B_{m,n}$
cannot happen for $\eta^{m,n}_t$, but we still consider the arrows of
emigrations from $B_{m,n}$. Their effect is the death of one individual
on the boundary of $B_{m,n}$. If $\eta_0(x)=\eta_0^{m,n}(x)=\I_{\{y\}
}(x)$ for some $y \in B_{m,n}$, $\eta_t\geq\eta^{m,n}_t$ by Remark \ref
{remM1} since if $x \notin B_{m,n}$, then $\eta^{m,n}_t(x)=0$;
otherwise conditions in Remark \ref{remM1} are satisfied for each pair
of sites $(x,y)$; see also Remark \ref{noninv}.

We say that $(m,n)$ is \textit{wet} if $\eta^{m,n}_t$ starting at time
$nT$ with at least one individual in $I_{m}$ is such that there is at
least one individual in $I_{m-1}$ and one individual in $I_{m+1}$ at
time $(n+1)T$. Otherwise the site is \textit{dry}. The event $G_{m,n}:=\{
(m,n)\mbox{ is wet}\}$ is measurable with respect to the graphical
construction in $B_{m,n}$: we prove that we can choose $L$ and $T$ such
that the probability of a site $(m,n)$ to be wet can be made
arbitrarily close to $1$ if $\phi$ is small enough.
%: this implies that with large probability one among the individuals
%in $I_{m}$ at time $nT$ is duplicated at time $(n+1)T$ in $I_{m-1}$
%and $I_{m+1}$.
By translation invariance it is enough to show it for $(0,0)$.
We call $\eta^{0,0}_{t}:=\xi_{t}$, we fix $L>0$ and we prove that for
each $\varepsilon>0$ there exists~$T$ and $\phi$ such that
\begin{equation}
\mathbb{P}\bigl((0,0) \mbox{ is wet}\bigr)\geq1-\varepsilon,
\label{genwet}
\end{equation}
that is, that if there exists one individual in a site $(i,j)\in
I_0=:I$, there is at least one individual both in $I_1$ and $I_{-1}$
with large probability.\vadjust{\goodbreak}

In order to prove it for $\phi$ small enough, we begin by showing that
it holds for a process with $\phi=0$ inside $B$: let $\widetilde{\mathbb
{P}}(\xi_t \in\cdot)$ denote the law of such a~process. This means
that each individual in box $B$ survives forever.

We choose a preferential path $(i,j), (i+1,j), \ldots,(L,j), (L+1,j)$:
we prove that there exists $T$ large enough so that the abscissas of
the rightmost and leftmost particles are respectively larger than $L$
and smaller than $-L$ with probability larger than $1-\varepsilon$, since
this is one possibility for the site $(0,0)$ to be wet.

A similar idea works for the leftmost particle. We conclude that if
$\phi=0$ for all $\varepsilon>0$, $\lambda>0$, $1<N<\infty$ there exists
$T=L\overline{T}$ such that
\begin{equation}
\widetilde{\mathbb{P}}\bigl((0,0) \mbox{ is wet}\bigr)>1-\varepsilon/2.
\label{pwet}
\end{equation}
Now we prove (\ref{genwet}) for $\phi$ small enough.
Let $A_{L}=A_L(\phi,N)$ be the time of the first death on the finite
box $(-4L,4L)^2$. If $A_L>T:=L\overline{T}$, for each $\varepsilon>0$ we
can take $\phi>0$ small enough for
\begin{eqnarray*}
\mathbb{P}\bigl((0,0) \mbox{ is wet}\bigr)&\geq&
\mathbb{P}\bigl((0,0) \mbox{ is wet}|A_{L}>T\bigr)\mathbb{P}(A_{L}>T)\\
&\geq&\widetilde{\mathbb{P}}\bigl((0,0) \mbox{ is wet}\bigr)e^{-\phi
N(8L)^2T}\geq 1-\varepsilon.
\end{eqnarray*}
Hence for all $\varepsilon>0$, $L>0$, $\lambda>0$, $1<N<\infty$ there
exists $T$ and $\phi^{1}_{c}(\lambda,N)>0$ such that if $\phi\leq\phi
^1_c(\lambda,N)$, then (\ref{genwet}) holds.

By comparing the process with an oriented percolation process, the
existence of an infinite path of wet sites corresponds to the existence
of individuals at all times, and for $\varepsilon$ small enough
percolation occurs; see \cite{cfdurrettten}. By monotonicity
(Proposition~\ref{monotonicity0}), the process survives for any $\phi
\leq\phi^{1}_{c}(\lambda,N)$.

(ii) Let $\xi_t$ be a continuous-time Galton--Watson
process without spatial structure starting from $|\eta_0|\leq\xi_0$
individuals. We couple the total number of particles of the two
processes. Each individual in both processes breeds at rate $1$ (except
for $\eta_t$ when the full carrying
capacity of the site is reached) and dies at rate $\phi$. Since we are
interested in the total number of particles, migrations do not count in
this coupling. Therefore $|\eta_t|\leq\xi_t$ for all $t\geq0$. If~$\xi
_0$ is finite and $\phi\geq1$, then the Galton--Watson process becomes
extinct; this implies that $\eta_t$ dies out for any $\phi\geq1$.

Assume now that $\eta_{0}\in\OmN$; we prove that the process becomes
extinct when $\phi>1$. By translation invariance, for each $t>0$,
\begin{eqnarray*}
\frac{d}{dt}\mathbb{E}(\eta_{t}(x))
&=&\mathbb{E}(\mathcal{L}\eta
_{t}(x))\\
&=&\mathbb{E}\biggl(\eta_{t}(x)\I_{\{\eta_{t}(x) \leq N-1\}
}-\phi\eta_{t}(x)+\sum_{y \sim x}\I_{\{\eta_{t}(y)=N,\eta_t(x)<N\}}\lambda/(2d)\\
&&\hspace*{113pt}\qquad{}-\sum_{y
\sim x}\I_{\{\eta_{t}(x)=N, \eta_t(y)<N\}}\lambda/(2d)\biggr)\\
&=&\mathbb{E}\bigl(\eta_{t}(x)\I_{\{\eta_{t}(x) \leq N-1\}}-\phi\eta
_{t}(x)\bigr)\leq(1-\phi)\mathbb{E}(\eta_{t}(x)),
\end{eqnarray*}
and by Gronwall's lemma the process converges to $0$ uniformly with
respect to $x$. By Corollary \ref{monotonicity1} the process dies out
for each $\phi>1$.

(iii) The claim follows by Steps (i), (ii) and
Corollary \ref{monotonicity1}. Starting from $\eta_0 \in\OmN$, the
existence of the upper invariant measure follows from attractiveness,
and it is nontrivial by Step (i).

%s7.2 ###
\subsection{Model II}
%s7.2.1 ###
\subsubsection{\texorpdfstring{Proof of Theorem \protect\ref{phasetransition2}}{Proof of Theorem 4.1}}

(ii) Since $\phi_A\geq\phi$, Model $I$ is stochastically larger than
Model $\mathit{II}$. If $\phi>\phi_c(\lambda,N)$, both of them die out by
Theorem \ref{phasetransition1}.

(i) Assume $\phi< \phi_c(\lambda,N)$ ($\leq1$ by Theorem
\ref{phasetransition1}). We follow the idea in~\cite{Berg}, Theorem~4.4,
and we compare the system with a subcritical percolation
process. We prove (i) when $d=2$ in order to simplify the notation
(the same proof works for all $d\geq1$). Let $(\eta_{t})_{t\geq0}$ be
a process with generator $(\ref{generator})$, rates $(\ref{rates2})$
and $\eta_{0}\in\OmN$. We define
\begin{equation}
\qquad\cases{
\mathcal{A}=[-2L,2L]^{2}\times[0,2T]; \qquad \mathcal
{B}=[-L,L]^{2}\times[T,2T],\vspace*{2pt}\cr
\mathcal{C}_{b}=\{(x,y,t)\in\mathcal{A}\dvtx t=0\},\vspace*{2pt}\cr
\mathcal{C}_{s}=\{(x,y,t)\in\mathcal{A}\dvtx |x|=2L \mbox{ or } |y|=2L
\},\vspace*{2pt}\cr
\mathcal{C}=\mathcal{C}_{b} \cup\mathcal{C}_{s}=\{(x,y,t)\in\mathcal
{A}\dvtx |x|=2L \mbox{ or } |y|=2L \mbox{ or } t=0\},}
\label{blocks2}
\end{equation}
where $T$ is a time to be fixed later.

In other words $\mathcal{C}$ is part of the boundary of the space--time
region $\mathcal{A}$, which contains the smaller region $\mathcal{B}$.
We construct a percolation process on $\mathcal{N}=\mathbb{Z}^{2}\times
\mathbb{Z}_{+}$ starting from $(\eta_{t})_{t\geq0}$. We consider for
each $(m,n,k)\in\mathcal{N}$ a~modification $\eta^{m,n,k}_t$ of~$\eta
_t$: the process $(\eta^{m,n,k}_t)_{t\geq0}$ is constructed through
the graphical representation of $\eta_t$ in $\mathcal{A}+(mL,nL,kT)$,
but $\eta^{m,n,k}_t(x)=N$ for all $x \in(mL,nL)+(-2L,2L)^2$, $t\leq
kT$ and $x \notin(mL,nL)+(-2L,2L)^2$ for all $t\geq0$. Therefore an
emigration from $\mathcal{A}+(mL,nL,kT)$ cannot happen and an
immigration from a site $y$ on the boundary of $(mL,nL)+[-2L,2L]^2$
after~$kT$ is always possible with rate $\lambda$. By Remarks \ref
{remM1} and \ref{noninv}, $\eta_t\leq\eta_t^{m,n,k}$ for all~$m$,
$n$, $k$ and $t\geq0$, since if $x \notin(mL,nL)+(-2L,2L)^2$, then
$\eta^{m,n}_t(x)=N$, otherwise conditions in Remark \ref{remM1} are
satisfied for each pair of sites $(x,y)$.

We say that a site $(m,n,k) \in\mathcal{N}$ is wet if there are no
individuals for the process $\eta^{m,n,k}_t$ in $\mathcal
{B}+(mL,nL,kT)$. A site is dry if it is not wet.

We show, through a series of lemmas, that the probability of a site to
be wet is as large as we want by taking $\phi_A$ large. By translation
invariance we prove it for $(0,0,0)$, and we denote $\eta^{0,0,0}_t:=\xi
_t$. Let $0<\phi_A<\infty$. First of all we prove that there exists a
time $S$ at which with large probability there is at most $1$
individual per site on $(-2L,2L)^2$ (Lemma \ref{lemB1}). After~$S$,
there exists a time $T$ such that there are no individuals in
$(-2L,2L)^2$ with large probability (Lemma \ref{lemB2}). Therefore
with large probability the only possibility of having one individual in
$\mathcal{B}$ is that an emigration from the boundary after time $T$
reaches $[-L,L]^2$ before $2T$: the last step consists in proving that
such an event has small probability.

%Therefore $\xi_t$ is stochastically smaller than $\overline{\xi}_t$,
%the process with the same transitions with $\overline{\xi}_0\in E^N$
%and no deaths outside $[-2L,2L]^{2}$.\\
We first introduce an auxiliary process whose transitions are not
translation invariant:
\begin{lemma}
Let $(\overline{\xi}_{t})_{t \geq0}$ be a process with only birth and
death rates: if $x \in(-2L,2L)^2$
\begin{equation}
\overline{P}^1_l(x)=\I_{\{l \leq N-1\}}(l+\lambda); \qquad \overline
{P}^{-1}_l(x)=l\bigl(\phi_A\I_{\{l\leq N_{A}\}}+\phi\I_{\{N_{A}<
l\}}\bigr)\hspace*{-30pt}
\label{procsuprates}
\end{equation}
and $\overline{\xi}_{t}(x)=N$ for all $x \notin(-2L,2L)^2$, $t\geq0$.
Then $(\overline{\xi}_{t})_{t\geq0}$ is stochastically larger than
$(\xi_{t})_{t \geq0}$.
\label{procsup}
\end{lemma}
\begin{pf} Both $\xi_{t}$ and $\overline{\xi}_t$ are equal to $N$
for each $t\geq0$ outside $(-2L,2L)^2$. By Remark \ref{noninv}, we
check the conditions in Remark \ref{remM1} for each pair of sites
$(x,y)$ with either $x$ or $y$ in $(-2L,2L)^2$. If $x \in(-2L,2L)^2$,
$(\overline{\xi}_{t}(x))_{t\geq0}$ is a~birth and death process whose
birth rate is the original one plus the largest immigration rate on $\xi
_{t}(x)$, and whose death rate is the original one plus the smallest
emigration rate on $\xi_{t}(x)$, which is null. For each $\eta\in
\Omega$,
\begin{eqnarray*}
P^1_{\eta(y)}+\Gamma^1_{\eta(x),\eta(y)}&\leq& \I_{\{\eta(y) \leq N-1\}
}\bigl(\eta(y)+\lambda\bigr)=\overline{P}^1_{\eta(y)},\\
P^{-1}_{\eta(x)}+\Gamma^1_{\eta(x),\eta(y)}& \geq& \I_{\{\eta(x)\leq
N_{A}\}}\phi_{A}\eta(x)+\I_{\{N_{A}< \eta(x)\}}\phi\eta(x)=\overline
{P}^{-1}_{\eta(x)};
\end{eqnarray*}
then all conditions are satisfied.
\end{pf}
\begin{lemma}
For all $\varepsilon>0$, $L$ there exists $S>0$ and $\phi_A$ such that
%
%e8 ###
\begin{equation}
\mathbb{P}(G_{L}(S))>1-\varepsilon/6,
\label{numero}
\end{equation}
where $G_{L}(S)=\{\xi_{S}(x)\leq1 \mbox{ for each }x\in(-2L,2L)^{2}\}$.
\label{lemB1}
\end{lemma}
\begin{pf}
We prove (\ref{numero}) for $(\overline{\xi}_t)_{t\geq0}$ with law
$\overline{\mathbb{P}}(\overline{\xi}_t\in\cdot)$. By monotonicity
(Lemma \ref{procsup}) it will be true for $(\xi_t)_{t\geq0}$. For all
$\varepsilon>0$ and $L$ we take $S$ large enough so that the number of
visits $H^{S}_{x}$ to $0$ of $\overline{\xi}_{t}(x)$ before $S$ satisfies
%
%e9 ###
\begin{equation}
\overline{\mathbb{P}}(H^{S}_{x}=0)\leq\frac{\varepsilon}{18(4L)^{2}}.
\label{tmp11}
\end{equation}
If there is at least one visit, we consider
\begin{eqnarray}\label{num}
&&\sum_{k=1}^{K}\overline{\mathbb{P}}\bigl(\overline{\xi}_{S}(x)>
1\vert H^{S}_{x}=k\bigr)\overline{\mathbb{P}}(H^{S}_{x}=k)
\nonumber
\\[-8pt]
\\[-8pt]
\nonumber
&&\qquad{} +\sum_{k=K+1}^{\infty}\overline{\mathbb{P}}\bigl(\overline{\xi}_{S}(x)>
1\vert H^{S}_{x}=k\bigr)\overline{\mathbb{P}}(H^{S}_{x}=k).
\end{eqnarray}
By taking~$K$ large enough the second sum (in which there are more than~$K$
hits to~$0$) is as small as we want. There are at least two
individuals in a~site\vadjust{\goodbreak} after the $i$th visit to $0$ only if the
exponential clock $B_{i}\sim \operatorname{Exp}(1+\lambda)$ [birth rate if $\overline
{\xi}_{t}(x)=1$] rings before the one of $D_{i}\sim \operatorname{Exp}(\phi_{A})$
[death rate if $\overline{\xi}_{t}(x)=1$]. Therefore for all $\varepsilon
>0$, $L$ and $K$ we can take $\phi_{A}$ large enough for the first sum
in (\ref{num}) to be smaller than
\begin{equation}\qquad
\sum_{k=1}^{K}\overline{\mathbb{P}}(\exists i\in\{1,2,\ldots, k\}
\dvtx B_{i}<D_{i})\leq K^{2}\frac{1+\lambda}{1+\lambda+\phi_{A}}\leq\frac
{\varepsilon}{18(4L)^{2}}.
\label{tmp13}
\end{equation}
By (\ref{tmp11}) and (\ref{tmp13}) for all $\varepsilon>0$, $L$
there exists $S$ and $\phi_{A}$ large enough for
\begin{equation}
\mathbb{P}((G_{L}(S))^{c})\leq(4L)^{2}\sup_{x \in
(-2L,2L)^{2}}\overline{\mathbb{P}}\bigl(\overline{\xi}_{S}(x)> 1\bigr)\leq
\varepsilon/6,
\label{epsfin1}
\end{equation}
and the claim follows.
\end{pf}
\begin{lemma}\label{lemB2}
For all $L$, $\varepsilon>0$ there exists $\overline{S}$ and $\phi_A$ such that
\[
\mathbb{P}\bigl(\overline{G}_{L}(S+\overline{S})\bigr)\geq1-\varepsilon/3,
\]
where $\overline{G}_{L}(S+\overline{S})=\{\xi_{S+\overline{S}}(x)=0\mbox
{ for each }x \in(-2L,2L)^{2}\}$, and $S$ is given by Lemma \ref{lemB1}.
\end{lemma}
\begin{pf}
If $G_L(S)$ holds, we take $\overline{S}$ small so that there are
neither births nor immigrations from the boundary $\mathcal{C}_{s}$
between $S$ and $\overline{S}$ and $\phi_A$ large so that all
individuals in $(-2L,2L)^{2}$ die before $\overline{S}$ with large
probability.\vspace*{1pt} Namely, given $D\sim \operatorname{Exp}(\phi_{A})$ and $B\sim \operatorname{Exp}
((1+\lambda)(4L-1)^{2})$, for all $\varepsilon>0$, $L$\vspace*{1pt} there exists
$\overline{S}$ small and $\phi_A(\overline{S})$ large enough for
\begin{eqnarray*}
\mathbb{P}\bigl(\overline{G}_{L}(S+\overline{S})\vert G_{L}(S)\bigr)&\geq& \mathbb
{P}(D<\overline{S})^{(4L-1)^{2}}\mathbb{P}(\overline{S}<B) \\
&\geq&\bigl(1-\exp(-\phi_A \overline{S})\bigr)^{(4L-1)^{2}}\exp
\bigl(-(1+\lambda)(4L-1)^2\overline{S}\bigr)\\
&\geq& 1-\varepsilon/6.
\end{eqnarray*}
If $T=S+\overline{S}$, given by the two previous lemmas,
\begin{equation}
\mathbb{P}((\overline{G}_{L}(T))^c)\leq\varepsilon/6+\varepsilon/6=\varepsilon/3,
\label{dryT}
\end{equation}
and the claim follows.
\end{pf}

Therefore $\xi_{T}(x)=0$ for each $x \in(-2L,2L)^2$ with large probability.
Since $P^1_0=0$, the only way to get an individual in $[-L,L]^2$
between times $T=S+\overline{S}$, given by the two previous lemmas, and
$2T$ is that a migration from $y \in\mathcal{C}_{T}=\{y=(y_1,y_2)\in
\mathcal{A}\dvtx |y_1|=2L \mbox{ or } |y_2|=2L \}$ gives birth to a chain of
individuals which reaches $[-L,L]^2$ in a time smaller than $T$.
Suppose that $\xi_t(y)=N$ for all $y \in\mathcal{C}_T$ and $t \in
[T,2T]$. By monotonicity it will be true for any smaller configuration.
We fix $\widetilde{K}$ large so that the number of
emigra\-tions~$E_{T,\mathcal{C}_T}$ from $\mathcal{C}_{T}$ to $(-2L,2L)^2$ from time
$T$ to $2T$ is larger than $\widetilde{K}$ with probability smaller
than $\varepsilon/3$.

After one migration, with probability smaller than
$(1+\lambda)/(\phi_A+1+\lambda)$ there is a new birth or a new
immigration at $x$ before the death of the individual.
If the number of such migrations is smaller than $\widetilde{K}$, by
taking~$\phi_A$ large enough
\begin{equation}\qquad
\mathbb{P}\bigl((0,0,0) \mbox{ is dry}\mid\overline{G}_{L}(T),E_{T,\mathcal
{C}_{T}}\leq\widetilde{K}\bigr)
\leq\frac{\widetilde{K}(1+\lambda)}{\phi_A+1+\lambda}<\varepsilon/3.
\label{dry-temp}
\end{equation}
By (\ref{dryT}) and (\ref{dry-temp}) we get
\begin{eqnarray}\label{dry}
\mathbb{P}\bigl((0,0,0) \mbox{ is dry}\bigr)&< & \mathbb{P}\bigl((0,0,0)
\mbox{ is dry}\mid\overline{G}_{L}(T)\bigr)+\varepsilon/3 \nonumber\\
&= & \mathbb{P}\bigl((0,0,0) \mbox{ is dry}\mid\overline
{G}_{L}(T),E_{T,\mathcal{C}_{T}}>\widetilde{K}\bigr)\mathbb{P}(E_{T,\mathcal
{C}_{T}}>\widetilde{K})
\nonumber
\\[-8pt]
\\[-8pt]
\nonumber
&&{}+\mathbb{P}\bigl((0,0,0) \mbox{ is dry}\mid\overline
{G}_{L}(T),E_{T,\mathcal{C}_{T}}\leq\widetilde{K}\bigr)\mathbb
{P}(E_{T,\mathcal{C}_{T}}\leq\widetilde{K})\\
&&{}+\varepsilon/3 < \varepsilon/3+\varepsilon/3+\varepsilon/3=\varepsilon.\nonumber
\end{eqnarray}
Now we construct a dependent percolation model such that the
probability of a site to be wet is as large as we want. For all
$(m,n,k)$ and $(x,y,z)$ in $\mathcal{N}$ such that $k \leq z$ and the
intersection between $(mL,nL,kT)+\mathcal{A}$ and $(xL,yL,zT)+\mathcal
{A}$ is not empty we draw an oriented edge. Notice that the probability
of a site $(m,n,k)$ to be wet depends only on the existence of a path
of individuals within $(mL,nL,kT)+\mathcal{A}$; since each block
intersects only a finite number of other blocks, there exists $K$ such
that all sets of sites in $\mathcal{N}$ with distance larger than $K$
are independently wet. Here the distance is the minimal number of edges
(without orientation) connecting two sites. Therefore this is a
dependent percolation model with finite range of interactions.

By monotonicity, the probability of having an individual in
metapopulation model $(\eta_t)_{t\geq0}$ in $(mL,nL,kT)+\mathcal{B}$
is smaller than the probability of the existence of a path of dry sites
in the percolation model with endpoint $(m,n,k)$ starting from
$(y,z,0)$ for some $(y,z)\in\mathbb{Z}^2$. By working as in
\cite{Berg},
proof of Theorem $4.4$, for any given site $x\in S$ there
exists a random time~$T_{x}$ a.s. finite after which there will never
be any individual.
Let $A$ be a finite subset of $\mathbb{Z}^{d}$ and $T_{A}:=\max\{T_{x},
x \in A\}$. By monotonicity, $T_A$ may be chosen uniformly in the
initial configuration $\eta_0$. Given $\eta_0\in\OmN$, let $\overline
{\nu}$ be the invariant measure $ \lim_{t \to\infty}\delta
_{\eta_0}T(t)$ [where $T(t)$ is the semi-group of the process], which
exists by attractiveness. For each finite set $A \subset\mathbb{Z}^d$
\[
\overline{\nu}\bigl(\xi\in\Omega\dvtx  \xi(x)>0 \mbox{ for some }x\in A\bigr)=0.
\]
Since $\overline{\nu}$ gives null probability to each set of
configurations with at least one individual, it concentrates on the
empty configuration; that is, $\overline{\nu}\sim\delta_{\underline
{0}}$, and ergodicity follows.

%s7.2.2 ###
\subsubsection{\texorpdfstring{Proof of Theorem \protect\ref{phasetransitionnew}}{Proof of Theorem 4.2}}
(i) We work as in proof of Theorem \ref{phasetransition1} with the
same notation:\vadjust{\goodbreak} we suppose $d=2$, we use (\ref{blocks1}) in order to
make a~comparison with an oriented percolation model and we define for
each $(m,n)$ a modification $\eta_t^{m,n}$ of the process in the same
way. A site $(m,n)\in\mathbb{Z}^2$ is wet if $\eta^{m,n}_t$ starting
at time $nT$ with at least one individual in $I_{m}$ is such that there
is at least one individual in $I_{m-1}$ and one individual in $I_{m+1}$
at time $(n+1)T$. By translation invariance we work on $\xi_t:=\eta
^{0,0}_t$. We will prove the analog of (\ref{genwet}).

We start with one individual at $x=(i,j)\in I$, and we choose a
preferential path $(i,j), (i+1,j), \ldots,(L,j), (L+1,j)$: if there
exists $T$ such that the abscissas of the rightmost and leftmost
particles of $\xi_t$ are respectively larger than $L$ and smaller than
$-L$ at $T$, then site $(0,0)$ is wet. We begin by working with $\phi
_A=0$ and call $\widetilde{\mathbb{P}}(\xi_t \in\cdot)$ the law of the
process in this case.

We fix $L>0$. We wait until in $(i,j)$ we have a stack of $N_A$
individuals: since $\widetilde{A}=\{N_A,N_A+1,\ldots, N\}$ is an
absorbing set (because $\phi_A=0$), after a finite time the local
population size reaches $N$ and migrates to $(i+1,j)$. Then we wait for
another migration from $(i+1,j)$ to $(i+2,j)$, and so on, so that in a
finite time we reach $(L+1,j)$. We work in the same way for the
leftmost particle. We conclude that if $\phi_A=0$ for all $\varepsilon
_1>0$, $\lambda>0$, $1<N<\infty$ there exists $T_{\varepsilon_1}$ such
that $\xi_T(x)\geq N_A$ for each $x \in[-2L,2L]^2$ with probability
larger than $1-\varepsilon_1$: hence
%
%e10 ###
\begin{equation}
\widetilde{\mathbb{P}}\bigl((0,0) \mbox{ is wet}\bigr)\geq1-\varepsilon_1.
\label{tmm}
\end{equation}
Suppose $\phi_A>0$. For each $\varepsilon>0$ there exists $\varepsilon_1$ and
$T_{\varepsilon_1}$ large so that (\ref{tmm}) holds and $\phi_A$ small so
that the probability of a death before $T_{\varepsilon_1}$ is as small as
we want. Therefore
\[
\mathbb{P}\bigl((0,0) \mbox{ is wet}\bigr)\geq1-\varepsilon.
\]
We conclude that for all $L$, $\varepsilon>0$ and $(m,n)\in\mathcal{N}$
the event $G_{m,n}=\{(m,n)\allowbreak \mbox{ is wet}\}$, which is measurable with
respect to the graphical construction in~$B_{m,n}$, satisfies $\mathbb
{P}(G_{m,n})>1-\varepsilon$ by taking $T$ large and $\phi_{A}$ small. By
comparison arguments with oriented percolation we get the result.

(ii) The idea is that even for $\phi_A$ small, there exists
$\phi$ large so that the probability that the population size reaches
$N$ and then one individual migrates is small. One can prove the result
by repeating the steps we did to prove Theorem \ref
{phasetransition2}.

\subsection{Model III}
%s7.3.1 ###
\subsubsection{\texorpdfstring{Proof of Proposition \protect\ref{monotonicity3}}{Proof of Proposition 5.1}}

We check the sufficient conditions for stochastic order from Theorem
\ref{cns}. We call $\{P^{\cdot}_{\cdot,\cdot},\Gamma^{\cdot}_{\cdot
,\cdot}\}$ the rates of $(\xi_t)_{t\geq0}$ and $\{\widetilde{P}^{\cdot
}_{\cdot,\cdot},\widetilde{\Gamma}^{\cdot}_{\cdot,\cdot}\}$ the ones of
$(\eta_t)_{t\geq0}$. They are given by
\begin{eqnarray*}
P_{\alpha}^{-1}&=&\cases{
\alpha\phi_{A,1}, & \quad $\mbox{if }\alpha\leq N_{A},$\vspace*{2pt}\cr
\alpha\phi_1, & \quad$\mbox{if } \alpha> N_{A},$}\qquad
\widetilde{P}_{\alpha}^{-1}=\cases{
\alpha\phi_{A,2}, &\quad $ \mbox{if }\alpha\leq N_{A},$\vspace*{2pt}\cr
\alpha\phi_2, & \quad $\mbox{if }\alpha> N_{A},$}
\\[-2pt]
P_{\beta}^{1}&=&\widetilde{P}_{\beta}^{1}=\beta\qquad \mbox{ if } \beta<N,\\[-2pt]
\Gamma_{\alpha, \beta}^{k}&=&\widetilde{\Gamma}_{\alpha, \beta}^{k}=
\lambda\qquad \mbox{if } \alpha-k\geq N-M \mbox{ and }\beta+k\leq N.
\end{eqnarray*}
Let $\alpha\leq\gamma$, $\beta\leq\delta$. We evaluate the terms in
condition (\ref{C+}). The birth rates give
\begin{eqnarray*}
\sum_{k\in X\dvtx k > \delta-\beta+j_{1}}\widetilde{P}_{\beta}^{k}&=&\I_{\{1
> \delta-\beta+j_{1}\}}\widetilde{P}_{\beta}^{1}=\beta\I_{\{\beta=\delta
<N, j_{1}=0\}},\\[-2pt]
\sum_{l\in X\dvtx l >j_{1}}P_{\delta}^{l}&=&\I_{\{1 >j_{1}\}}P_{\delta
}^{1}=\delta\I_{\{j_{1}=0,\delta<N\}},
\end{eqnarray*}
thus
\begin{equation}
\sum_{k \in X\dvtx k > \delta-\beta+j_{1}}\widetilde{P}_{\beta}^{k}\leq\sum
_{l \in X\dvtx l >j_{1}}P_{\delta}^{l}.
\label{attbirthIV}
\end{equation}
The death rates give
\begin{eqnarray*}
\sum_{l\in X\dvtx l > \gamma-\alpha+h_{1}}P_{\gamma}^{-l}&=&\I_{\{1 > \gamma
-\alpha+h_{1}\}}P_{\gamma}^{-1}
\\[-2pt]
&=&\gamma\I_{\{\gamma=\alpha, h_{1}=0\}}\bigl( \phi_{A,1}\I_{\{\gamma\leq
N_{A}\}}+\phi_1\I_{\{N_{A}<\gamma\}} \bigr),\\[-2pt]
\sum_{k\in X\dvtx k >h_{1}}\widetilde{P}_{\alpha}^{-k}&=&\I_{\{1 >h_{1}\}
}\widetilde{P}_{\alpha}^{-1}=\alpha\I_{\{h_{1}=0\}}\bigl( \phi_{A,2}\I
_{\{\alpha\leq N_{A}\}}+\phi_2\I_{\{N_{A}<\alpha\}} \bigr),
\end{eqnarray*}
thus
\begin{equation}
\sum_{k\in X\dvtx  k >h_{1}}\widetilde{P}_{\alpha}^{-k}\geq\sum_{l\in X \dvtx l
> \gamma-\alpha+h_{1}}P_{\gamma}^{-l}.
\label{attdeathIV}
\end{equation}
Now we consider the migration rates
\begin{eqnarray*}
\sum_{k \in I_{a}}\widetilde{\Gamma}_{\alpha,\beta}^{k}&=&\sum_{k \in
I_{a}}\lambda\I_{\{k\leq(\alpha-N+M)\wedge(N-\beta)\}},\\[-2pt]
\sum_{l \in I_{b}}\Gamma_{\gamma,\delta}^{l}&=&\sum_{l \in I_{b}}\lambda
\I_{\{l\leq(\gamma-N+M)\wedge(N-\delta)\}}.
\end{eqnarray*}
By (\ref{Ia})--(\ref{Id}), setting $l=k-\delta+\beta$,
%since $\gamma-\alpha+m_{i}\geq m_{i}\geq\delta-\beta+j_{i}\geq j_{i}$
%for each $i\leq K$, we get $I_{a}\subseteq I_{b}$. Hence
%
\begin{eqnarray*}
&&\hspace*{-4pt}\sum_{k \in I_{a}}\lambda \I_{\{k\leq(\alpha-N+M)\wedge(N-\beta)\}}\\[-2pt]
%= &\lambda|\bigcup_{i=1}^{K}\{m_{i}\geq k>\delta-\beta+j_{i}\}
&&\hspace*{-4pt}\qquad=  \lambda\Biggl|\bigcup_{i=1}^{K}\{m_{i}-\delta+\beta\geq l>j_{i}\}
\cap\{0\leq l\leq(\alpha-N+M-\delta+\beta)\wedge(N-\delta)\}\Biggr|\\[-2pt]
&&\hspace*{-4pt}\qquad\leq\lambda\Biggl|\bigcup_{i=1}^{K}\{\gamma-\alpha+m_{i}\geq l>j_{i}\}
\cap\{0\leq l\leq(\gamma-N+M)\wedge(N-\delta)\}\Biggr|\\[-2pt]
&&\hspace*{-4pt}\qquad=\sum_{l \in
I_{b}}\lambda\I_{\{l\leq(\gamma-N+M)\wedge(N-\delta)\}}
\end{eqnarray*}
since $\delta\geq\beta$ and $\gamma\geq\alpha$. Therefore
%
%e11 ###
\begin{equation}
\sum_{k \in I_{a}}\widetilde{\Gamma}_{\alpha,\beta}^{k}\leq\sum_{l \in
I_{b}}\Gamma_{\gamma,\delta}^{l}.
\label{attjumpIV1}
\end{equation}
In a similar way we note that
\begin{eqnarray*}
\sum_{k \in I_{d}}\widetilde{\Gamma}_{\alpha,\beta}^{k}&=&\sum_{k \in
I_{d}}\lambda\I_{\{k\leq(\alpha-N+M)\wedge(N-\beta)\}}, \\[-2pt]
 \sum_{l\in I_{c}}\Gamma_{\gamma,\delta}^{l}&=&\sum_{l \in I_{c}}\lambda\I_{\{
l\leq(\gamma-N+M)\wedge(N-\delta)\}};
\end{eqnarray*}
then, by setting $k=l-\gamma+\alpha$, the sum $ \sum_{l
\in I_{c}}\lambda\I_{\{l\leq(\gamma-N+M)\wedge(N-\delta)\}}$ is equal to
\begin{eqnarray*}
&&\lambda\Biggl|\bigcup_{i=1}^{K}\{m_{i}-\gamma+\alpha\geq k>h_{i}\}\cap
\{0\leq k\leq(\alpha-N+M)\wedge(N-\delta-\gamma+\alpha)\}\Biggr|\\[-2pt]
&&\qquad\leq\lambda\Biggl|\bigcup_{i=1}^{K}\{\delta-\beta+m_{i}\geq k>h_{i}\}
\cap\{0\leq k\leq(\alpha-N+M)\wedge(N-\beta)\}\Biggr|\\[-2pt]
&&\qquad=\sum_{k \in I_{d}}\lambda\I_{\{k\leq(\alpha-N+M)\wedge(N-\beta)\}}
\end{eqnarray*}
since $N-\delta-\gamma+\alpha\leq N-\beta$. Hence
%
%e12 ###
\begin{equation}
\sum_{k \in I_{d}}\widetilde{\Gamma}_{\alpha,\beta}^{k}\geq\sum_{l \in
I_{c}}\Gamma_{\gamma,\delta}^{l}.
\label{attjumpIV2}
\end{equation}
We get condition (\ref{C+}) by using (\ref{attbirthIV}) and (\ref
{attjumpIV1}) and condition (\ref{C-}) from~(\ref{attdeathIV}) and~(\ref{attjumpIV2}).

%s7.3.2 ###
\subsubsection{\texorpdfstring{Proof of Theorem \protect\ref{phasetransition3}}{Proof of Theorem 5.1}}

We follow the idea in \cite{cfschiaggr}, proof of Theorem~2. We
assume $d=2$. If $d\geq2$ the proof works in a similar way. We
take~$N$, $M$ such that $N-M>N_A$. We fix $x \in\mathbb{Z}^d$, and we start
from an initial configuration $\eta_0(x)=N-M$ and $\eta_0(z)=0$ for
each $z \neq x$. We prove that starting from $\eta_0$, after a finite
time there is a migration of the largest flock of $M$ ($N_A <M <
N-N_A$) individuals into a site $y \sim x$ which will give birth to
$N-M$ individuals in the new site with large probability.

For each $x\in\mathbb{Z}^2$ we consider a modification $(\eta
^{x}_t)_{t\geq0}$ constructed through the graphical representation in
$I_x: =[x-1,x+1]^2$ such that $\eta_t^x(z)=0$ for each $z\notin I_x$ and
$t\geq0$: we take into account births, deaths and emigrations from
$x$, births and deaths on each $y\sim x$, but we replace migrations of
$k$ individuals from $y\sim x$ to $x$ by the death of $k$ individuals
on $y$. For $y \sim x$, let
\begin{eqnarray*}
&&E_{x,y}:=\{\mbox{There exists } T<\infty \mbox{ such that }\eta
^{x}_T(y)=N-M |\\
&&\hspace*{94pt}\eta^x_0(x)=N-M,\eta^x_0(z)=0, \forall z\sim x\}.
\end{eqnarray*}
Note that $\eta_t\geq\eta_t^x$ [it follows by construction from the
graphical representation, since $\eta^x$ is built from $\eta$;
alternatively one can check conditions (\ref{C+})--(\ref{C-}) by Remark
\ref{noninv}]. In particular before $T$ the process $\eta^x_t(x)$
behaves as~$\eta_t(x)$ without immigration, and $\eta^x_t(y)$ behaves
as $\eta_t(y)$. Therefore if $E_{x,y}$ occurs, $\eta_T(y)\geq  N-M$.

To make a comparison with an oriented percolation model, we follow \cite
{cfkuulasmaa}: between any two nearest neighbor sites $x$, $y$ in
$\mathbb{Z}^{2}$ we draw a directed edge from $x$ to~$y$, denoted by
$[x,y\rangle$: we say that one edge is \textit{open} if $E_{x,y}$ happens.
This defines a \textit{locally dependent random graph} since $E_{x,y}$
depends only on the graphical representation in $I_x$. The probability
of the directed edge $[x,y\rangle$ to be open is the same for all edges
$[x,y\rangle$ and $E_{x,y}$ and $E_{z,t}$ are independently open if
$x\neq z$.

We prove that for each $\varepsilon>0$ there exists $N$ large enough for
$\mathbb{P}(E_{x,y} \mbox{ is}\allowbreak \mbox{open})\geq1-\varepsilon$.

By translation invariance we suppose $x=0$. We prove that the following
events happen with large probability: first of all, starting from
$N-M$,\vspace*{1pt} the number of visits to $N-M+1$ of $\eta^0_t(x):=\xi_t$ before
visiting $N_A$ is at least~$N^3$ (Lemma~\ref{N-M}); if there are at
least~$N^3$ visits to $N-M+1$, there are at least~$N^2$ visits to $N$
(Lemma~\ref{enne}) before reaching $N_A$; if there are at least $N^2$
visits to $N$, there are at least $N^{1/2}$ mass migrations of $M>N_A$
individuals to a fixed site $y\sim0$ (Lemma~\ref{lemstep3}); finally
one of these mass migrations gives birth to $N-M$ individuals on $y$
before reaching $N_A$ with large probability.

%Let $(\xi_t)_{t \geq0}$ be a process with transitions
%N_{A}\}}+\phi\I_{\{N_{A}<\xi_t(x)\}})\\
%then $(\eta_{t})_{t \geq0}$ is stochastically larger than $(\xi_t)_{t
%Roughly speaking we can couple $\eta_{t}(x)$ and $\xi_{t}(x)$ in such
%a way that each emigration of $0<k\leq M$ particles from $\eta_{t}(x)$
%corresponds to a multiple death, which we call \textit{mass death}, on $
%death is the maximal emigration rate of $\eta_{t}(x)$, which is $M
%that is the minimum of $2dM$ exponential random variables with rate $
%Since
%one can prove that Conditions (\ref{C+}) and (\ref{C-}) are satisfied.

(I) First of all we prove that the number of visits $R^{\xi
}_{N,M}$ to $N-M+1$ before reaching $N_{A}$ of the process $(\xi
_{t})_{t\geq0}$ starting at $N-M$ is large with large probability.
\begin{lemma}\label{N-M}
%
%e13 ###
\begin{equation}
\lim_{N \to\infty}\mathbb{P}(R^{\xi}_{N,M}\geq N^{3})=1.
\label{stimaI}
\end{equation}
\end{lemma}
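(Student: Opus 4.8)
The plan is to read the law of $R^\xi_{N,M}$ directly off the one–site dynamics of $\xi_t=\eta^0_t(x)$. Before the time $T$ at which $E_{x,y}$ occurs, $\xi_t$ evolves with no immigration, so at a state $i$ with $N_A<i\le N$ it is a birth--death process with birth rate $i$ and death rate $\phi i$, together with the downward emigration jumps $i\to i-k$. The crucial observation is that an emigration is subject to $i-k\ge N-M$, so it can never bring $\xi_t$ below $N-M$, and in any case $N-M>N_A$. Consequently the only way for $\xi_t$ to reach $N_A$ is to descend, one death at a time, through the window $[N_A,N-M]$, on which emigration is inactive and $\xi_t$ is a genuine nearest--neighbour birth--death chain with up rate $i$ and down rate $\phi i$. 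Since $\phi<1$, this chain has a strong upward bias, and descending all the way down to $N_A$ is very unlikely once $N$ is large.

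First I would quantify this with the ruin formula. On $\{N_A,\dots,N-M+1\}$ the embedded jump chain of $\xi_t$ is, in its interior, a homogeneous random walk with up probability $p=1/(1+\phi)$ and down probability $q=\phi/(1+\phi)$, so $q/p=\phi$. Let $r$ be the probability that, started at $N-M+1$, the chain reaches $N_A$ before returning to $N-M+1$. A descent can only begin with the step $N-M+1\to N-M$, after which, by Lemma \ref{ruinlemma} applied with $r_1=N_A$, $r_2=N-M+1$, $j=N-M$ and $n=N-M+1-N_A$,
\begin{equation*}
\mathbb{P}\big(\text{hit } N_A \text{ before } N-M+1 \mid \xi=N-M\big)=\frac{\phi^{\,n-1}(1-\phi)}{1-\phi^{\,n}}\le C\,\phi^{\,N-M-N_A},
\end{equation*}
so that $r\le C\,\phi^{\,N-M-N_A}$, which is exponentially small in $N$. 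Excursions of $\xi_t$ above $N-M+1$ (into the emigration zone, up to the reflection at the top state $N$) only postpone the return to $N-M+1$ and, because emigration keeps $\xi_t\ge N-M$ while all moves below $N-M$ are nearest neighbour, they open no alternative route to $N_A$; hence they do not affect $r$.

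Finally I would convert this into the tail bound. Starting from $N-M$, the chain reaches $N-M+1$ before $N_A$ with probability $1-O(\phi^{\,N-M-N_A})\to 1$, again by Lemma \ref{ruinlemma}. By the strong Markov property at the successive visits to $N-M+1$, each return before absorption at $N_A$ occurs with probability $1-r$, so $R^\xi_{N,M}$ stochastically dominates a geometric variable with failure probability $r$. Therefore
\begin{equation*}
\mathbb{P}\big(R^\xi_{N,M}\ge N^{3}\big)\ge (1-r)^{N^{3}}\ge\big(1-C\,\phi^{\,N-M-N_A}\big)^{N^{3}},
\end{equation*}
which tends to $1$ because $N^{3}\phi^{\,N-M-N_A}\to 0$ as $N\to\infty$ (with the choice $M=N_A+1$ the exponent equals $N-2N_A-1$, and $\phi<1$). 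This is exactly \eqref{stimaI}.

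The main obstacle is book-keeping rather than analysis: one must check carefully that the emigration moves and the reflection at $N$ cannot manufacture a path to $N_A$ that bypasses the window $[N_A,N-M]$ --- which is precisely what the constraint $i-k\ge N-M$ guarantees --- and then justify the stochastic domination by a geometric law through the strong Markov property at the visits to $N-M+1$. Once the dynamics has been reduced to the homogeneous birth--death chain on $[N_A,N-M]$, the ruin estimate and the resulting geometric tail bound are routine.
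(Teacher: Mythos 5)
Your proof is correct and follows essentially the same route as the paper's: both arguments reduce the problem to the nearest-neighbour birth--death chain on the window $\{N_A,\ldots,N-M+1\}$ (using the structural fact that emigration cannot take $\xi_t$ below $N-M$), apply Lemma \ref{ruinlemma} with $r_1=N_A$, $r_2=N-M+1$, $j=N-M$, $q/p=\phi$, and raise the resulting return probability $1-\phi^{N-M-N_A}$ to the power $N^3$, concluding since $\phi<1$. The only cosmetic difference is that the paper formalizes your ``excursions above $N-M+1$ are irrelevant'' step (including the possibility that such an excursion ends with an emigration landing exactly at $N-M$ rather than a death from $N-M+1$) by coupling $\xi_t$ with an auxiliary lumped chain $\zeta_t$ in which all states $\geq N-M$ are collapsed into $N-M+1$, whereas you handle it by the strong Markov property at the successive visits to $N-M+1$.
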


\begin{pf}
We construct a process $(\zeta_t)_{t\geq0}$ with state space $A:=\{
N_{A},N_{A}+1, \ldots, N-M+1\}$ by coupling with $\xi_t$ in the following
way:
\begin{itemize}
\item if $N_A\leq\xi_t\leq N-M+1$, then $\zeta_t=\xi_t$;

\item if $\xi_t\geq N-M$, then $\zeta_t=N-M+1$;
\end{itemize}
and $N_A$ is an absorbing state for $(\zeta_t)_{t\geq0}$. Each time
that $\zeta_t$ hits $N-M+1$ (an event which\vspace*{-1pt} can happen only from
below, i.e., if $\zeta_t$ moves from $N-M$ to $N-M+1$), so does $\xi
_t$. Therefore we count the number of visits $R^{\zeta}_{N,M}$ to
$N-M+1$ of the process $\zeta_t$ starting at $N-M+1$.
Note that $\xi_t$ comes back to state $N-M$ after visiting $N-M+1$ at
an a.s. finite time $T_M$ which satisfies
\begin{equation}
\mathbb{P}(T_M>t)\leq e^{-\lambda t}
\label{Txi}
\end{equation}
for each $N$, since if a mass migration of $\xi_t-(N-M)$ particles
occurs then $\xi_{t}$ comes back to $N-M$ with rate $\lambda$. The
skeleton of the process $(\zeta_t)_{t\geq0}$ moves as a discrete time
random walk on $A$ which comes back to $N-M$ after visiting $N-M+1$
with probability one, probability of birth $p=1/(1+\phi)$ and
probability of death $1-p$. We prove that
%
%e14 ###
\begin{equation}
\lim_{N\to\infty}\mathbb{P}(R^{\zeta}_{N,M}\geq N^{3})=1.
\label{stimaRn}
\end{equation}
The probability that, starting at $N-M$, $\zeta_t$ returns to $N-M+1$
before visiting $N_A$ is given by Lemma \ref{ruinlemma} with $r_1=N_A$,
$r_2=N-M+1$, $j=N-M$, $q/p=\phi$. Since after visiting $N-M+1$ the walk
returns to $N-M$, by the Markov property [$P_{N-M+1}(N-M)$ is the
notation in Lemma \ref{ruinlemma}],
\begin{eqnarray*}
\mathbb{P}(R^{\zeta}_{N,M}\geq N^3)&=&\bigl(P_{N-M+1}(N-M)\bigr)^{N^3}\geq (1-\phi
^{N-M-N_A})^{N^3}\\
%=& \exp(N^3\log{(1-\phi^{N-M-N_A})})\\
&\geq& \exp(-CN^3\phi^{N-M-N_A})
\end{eqnarray*}
so that (\ref{stimaRn}) [and then (\ref{stimaI})] follows since $\phi<1$.
\end{pf}

(II) %Step I states that by taking $N$ large $\xi_{t}$ visits
%$N-M+1$ at least $N^{3}$ times before reaching $N_{A}$ with large
%probability.
Let $R_N^{\xi}$ be the number of visits of $(\xi_t)_{t\geq0}$ to $N$
before visiting $N_A$ starting at $N-M$.
\begin{lemma}\label{enne}
%
%e15 ###
\begin{equation}\label{stimaII}
\lim_{N \to\infty}\mathbb{P}(R_{N}^\xi\geq N^{2})=1.
\end{equation}
\end{lemma}
\begin{pf} By (\ref{stimaI})
\begin{equation}
\mathbb{P}(R^\xi_{N}< N^{2})= \mathbb{P}(R^\xi_N< N^{2}|R^\xi
_{N,M}\geq N^{3})+o(1),
\label{tempRN}
\end{equation}
where $ \lim_{N\to\infty}o(1)=0$. We define a family of
i.i.d. random variables\break $\{X_i\}_{i=1,\ldots, N^3}$ such that $X_i=1$ if
$\xi_t$ reaches $N$ before $N-M$ at the $i$th visit to $N-M+1$, $0$
otherwise. One possibility for $X_i$ to be one is the birth of $N$
individuals without any death or mass migrations. Such an event has
probability larger than
\[
p_N:=\biggl(\frac{1}{1+\phi+\lambda M/N}\biggr)^M\geq\biggl(\frac{1}{1+\phi
+\lambda M}\biggr)^M=:p
\]
which does not depend on $N$. Therefore if $\overline{Y}$ is a binomial
random variable\vspace*{-1pt} with parameters $p$ and $N^3$, then $\mathbb{P}(\sum
^{N^3}_{i=1}X_i<N^{2})\leq\mathbb{P}(\overline{Y}<N^{2})$, which
converges to zero as $N$ goes to infinity by the central limit theorem.
\end{pf}

(III) Step (II) states that for each $\varepsilon>0$ we are able
to take $N$ large enough so that with probability larger than
$1-\varepsilon$ the process $\xi_t$ reaches $N$ at least $N^{2}$ times. We
prove that in this case, for a fixed $y\sim0$, with large probability
there is a migration $E_{N}=E_{N}(0,y)$ of $M$ individuals from $0$ to~$y$
at least $N^{1/2}$ times.
\begin{lemma}\label{lemstep3}
\[
\lim_{N\to\infty}\mathbb{P}(E_{N}\geq N^{1/2})=1.
\]
\end{lemma}

\begin{pf}
Notice that when $\xi_{t}$ visits $N$ there is a migration of $M$
individuals from $0$ onto site $y$ with rate $\lambda/(2d)$: if this is
not the case, either a death at $x$ or a different migration (i.e.,
less than $M$ individuals onto $y$ or a migration onto $z\sim x$, $z
\neq y$) occurs with rate smaller than $N\phi+\lambda M
(2d-1)/(2d)+(M-1)\lambda/(2d)$. Thus the probability of a migration to
$y$ of $M$ particles is larger than $\lambda/(2d(\lambda M+N\phi
))$.

The rest of the proof is identical to Step 2 of \cite{cfschiaggr}, proof of Theorem
$2$: the key point is that conditioning on $\{R^{\xi
}_{N} \geq N^{2}\}$, $E_{N}$ is larger than a binomial random variable
$V_{N}$ with parameters $N^{2}$ and $\lambda/(2d(\lambda M +N\phi))$,
such that $(V_N-\mathbb{E}(V_N))/(N^{1/2+a})$ converges to $0$ in
probability for all $a > 0$. The claim follows by taking $a \in(0,1/2)$.
\end{pf}
%
%Conditioning on $\{R^{\eta_{\cdot}(x)}_{N} \geq N^{2}\}$, $E_{N}$ is
%larger than a binomial random variable $V_{N}$ with parameters $N^{2}$
%and $\lambda/(2d(\lambda M +N\phi))$.
%For all $a > 0$ real value,
%where $C$ is a constant which does not depend on $N$. Therefore, as
%$N$ goes to infinity,
%$$
%$$
%and $(V_N-\mathbb{E}(V_N))/(N^{1/2+a})$ converges to $0$ in $L^{2}$
%and hence in probability. In particular,
%$$
%$$
%We choose $a \in(0,1/2)$ such that
%$$
%$$
%We conclude that, since
%N^{2})
%and since each term on the right-hand side converges to $1$ as $N$
%goes to infinity, the claim follows. \end{proof}

(IV) We show that given at least $N^{1/2}$ emigrations from
$0$ to $y$ of $M>N_A$ particles, at least one of these flocks of
individuals generates at least $N-M+1$ individuals on $y$ before
reaching size $N_A$. Every time there is a migration of $M$ individuals
to $y$, since $M=M(N_{A})>N_{A}$, the process $(\eta^0_t(y))_{t\geq0}$
is a birth and death chain with transitions
\begin{eqnarray*}
\eta^0_t(y)&\to&\eta^0_t(y) + 1 \qquad\mbox{at rate } \eta^0_t(y) \I_{\{
N_A<\eta^0_t(y)\leq N-M+1\}},\\
\eta^0_t(y) &\to&\eta^0_t(y)-1 \qquad\mbox{at rate } \eta^0_t(y)\phi\I_{\{
N_A <\eta^0_t(y) \leq N-M+1\}}.
\end{eqnarray*}
Take the same chain on $\{N_{A},\ldots,\infty\}$. Since $\phi<1$, the
chain is transient; therefore there is a positive probability $q(\phi)$
that starting at $M>N_{A}$ the chain will go on to infinity. The claim
follows as in Step 3 of \cite{cfschiaggr}, proof of Theorem~2,
since $N^{1/2}$ visits are enough for the probability to reach $N-M+1$
at least one time to approach $1$.

We conclude that for each $\varepsilon>0$ there exists $N$ and $T_{x,y}$
large such that~$E_{x,y}$ occurs in a finite time $T_{x,y}$ with
probability larger than $1-\varepsilon$.

(V) Finally we conclude the comparison with the oriented
percolation model on~$\mathbb{Z}^2$. We say that percolation occurs if
there exists an infinite path of directed open edges $\{
(x_0=0,x_{1})=e_{1},(x_{1},x_{2})=e_{2},\ldots,e_{k},\ldots\}$, that
is, such that $E_{x_{i},x_{i+1}}$ occurs for $i=0,1,\ldots.$
Suppose $\eta^0_0(0)=N-M$. If $e_{1}$ is open, then $\eta^0_t(0)$
reaches~$N$, migrates to $x_{1}$ and gives birth to $N-M$ individuals
on $x_{1}$ before dying out. Then also $e_{2}$ is open, therefore
starting from $\eta^{x_1}_t(x_1)=N-M$, it reaches $N$, migrates to
$x_{2}$ and gives birth to $N-M$ individuals on $x_{2}$ before dying
out, and so on: this is also true for the process $\eta_t\geq\eta^x_t$
for each $x$; therefore, the existence of an infinite path in the
percolation model implies the existence of an infinite path of
individuals.\looseness=1

We begin with one individual at $x \in\mathbb{Z}^d$. For each $\phi
_A<\infty$, with positive probability $\eta^0_t(0)$ reaches $N-M$
before $0$ in a finite time, and we can start our construction.

In order to prove that the existence of an infinite path in percolation
model has positive probability if $\mathbb{P}(E_{x,y})$ is large
enough, one can follow~\cite{cfkuulasmaa}, Theorem 3.2, and compare the
process to a a site percolation model. \textit{Here we need $d\geq2$};
otherwise the construction does not work. The idea consists of making a
comparison with an oriented site percolation model on the square
lattice with both edges from a site open with a given probability~$\pi
$, which can be taken as large as we want by taking $N$ large. Since
for such a model percolation occurs if $\pi$ is large enough, \cite
{cfkuulasmaa}, there is survival with positive probability.

If $\eta_0\in\OmN$, then the upper invariant measure $\bar{\nu}$,
which exists by attractiveness, is not concentrated on the Dirac
measure $\delta_{\underline{0}}$, and the claim follows.

%We define a new locally dependent random graph $(G',q)$ on $
%independently open if $x\neq z$ with the same distribution, but a
%dependence is present if $x=z$. We fix $w_1\sim0$ and $w_2 \sim0$
%non collinear vertices and we denote by $e_1=(0,w_1)$ and
%$e_2=(0,w_2)$ the corresponding edges. Let $I\subseteq I_x$, then the
%zero function $q(\cdot)$ satisfies
%q(I)=&1 & \mbox{ if } I\cap\{e_1,e_2\}=\varnothing\\
%q(I)=&\mathbb{P}((E_{xy})^c)& \mbox{ if } I\cap\{e_1,e_2\}\neq
%In other words each edge different from $e_1$ and $e_2$ is closed with
%probability one; if either $e_1\in I$ or $e_2\in I$, the probability
%that each edge in $I$ is closed is $1-\mathbb{P}(E_{xy})$.\\
%Now we compare the zero functions of $(G,p)$ and $(G'q)$: if either
%$e_1 \in I$ or $e_2 \in I$ then $p(I)= \mathbb{P}(
%otherwise $p(I)\leq1=q(I)$.\\
%Therefore $p(I)\leq q(I)$ and by \cite[Theorem 2.1]{cfkuulasmaa}, if
%percolation occurs for $(G',q)$ then percolation occurs for $(G,p)$.\\
%Notice that $(G',q)$ is essentially the oriented site percolation
%model on the square lattice with both edges from a site open with
%probability $\pi=1-\mathbb{P}((E_{xy})^c)$, which is as large as we
%want. It is known that for such a model percolation occurs if $\pi$ is
%large enough. Let $P^\pi(|\mathcal{C}_0|=\infty)$ be the probability
%of an infinite path of open sites containing $0$. If $\eta_0(y)=\I_{
%and a survival is possible.\\
%Assume $\eta_0\in E^N$, then there exists an invariant measure $\bar{
(ii) The proof is similar to that of Theorem \ref
{phasetransition1} [Step (ii)], so we skip it.

%s7.4 ###
\subsection{Model IV}
%s7.4.1 ###
\subsubsection{\texorpdfstring{Proof of Lemma \protect\ref{existence}}{Proof of Lemma 6.1}}
The process is a particular case of the reaction-diffusion process
introduced in \cite{cfChenbook}, Section 13.2: by following the same
notation, the reaction part of the formal generator (\ref{generator}) is
\[
\mathcal{L}_r f(\eta)=\sum_{x\in\mathbb{Z}^d}\sum_{k\neq0}q_x\bigl(\eta
(x),\eta(x)+k\bigr)[f(S^k_x \eta)-f(\eta)]
\]
with $q_x(\eta(x),\eta(x)+k)=\eta(x)\I_{\{k=1\}}+\phi\eta(x)\I_{\{
k=-1\}}$. The diffusion part is
\begin{eqnarray*}
\mathcal{L}_d f(\eta)&=&\sum_{x\in\mathbb{Z}^d}\sum_{y\sim x}\frac
{\lambda}{2d} \I_{\{\eta(x)\geq N, \eta(y)<N\}}[f(S^{-1,1}_{x,y} \eta
)-f(\eta)]\\
&\leq& \sum_{x\in\mathbb{Z}^d}\sum_{y\sim x}\frac{\lambda}{2d} \I_{\{
\eta(x)\geq N\}}[f(S^{-1,1}_{x,y} \eta)-f(\eta)]\\
&=:& \sum_{x,y\in\mathbb{Z}^d}p(x,y)\lambda c_x(\eta
(x))[f(S^{-1,1}_{x,y} \eta)-f(\eta)],
\end{eqnarray*}
where $p(x,y)=\I_{\{y \sim x\}}(2d)^{-1}$ and $c_x(\eta(x))=\I_{\{\eta
(x)\geq N\}}$.

Since the maximal number of particles involved in a transition is
finite, and the birth and death rates grow linearly, the hypotheses of
\cite{cfChenbook}, Theorems~13.17 and~13.19, are satisfied; hence
existence and uniqueness of this process follow.

%Conditions \cite[(13.37) and (13.38)]{cfChenbook} are trivially
%satisfied. Since for each $\eta\in\widetilde{\Omega}$
%&\sum_{x}\sum_{k=1}^{\infty}\eta(x)\I_{\{k=1\}}\alpha(x)<\infty
%&\sum_{k\neq0}(\eta(x)\I_{\{k=1\}}+\phi\eta(x)\I_{\{k=-1\}})<
%then \cite[(13.40) and (13.41)]{cfChenbook} hold. Condition
%$$
%$$
%Finally it is just a matter of computations to check that
%In order to prove unicity, we check the hypothesis of .
%Since $\eta\in\widetilde{\Omega}$, then (\ref{condun}) implies
%Conditions \cite[(1) and (2) of Theorem 13.19]{cfChenbook}; since $
%13.19]{cfChenbook} holds and the claim follows.

%s7.4.2 ###
\subsubsection{\texorpdfstring{Proof of Theorem \protect\ref{phasetransition4}}{Proof of Theorem 6.1}}

(i) First of all we prove that there is stochastic order between
Model $I$ and Model $\mathit{IV}$. We consider the Model $I$ as a~process
constructed on $\Omega=\mathbb{Z}^{\mathbb{Z}^d}$ with birth rates null
if the number of particles in a site is larger or equal to $N$.\vadjust{\goodbreak}
\begin{lemma}
Let $\xi_{t}=\xi_{t}(\phi',\lambda')$ be a process defined by (\ref
{generator}) with rates~gi\-ven by (\ref{rates1}), that is, a Model
I-type process. Let $\eta_{t}=\eta_{t}(\phi,\widetilde{\phi},\lambda)$\vspace*{1pt}
be a~Model~IV-type process. If $\phi=\phi'$, $\lambda=\lambda'$ and $\xi
_0(x)\leq N$ for each $x \in\mathbb{Z}^d$, then $(\eta_{t})_{t\geq0}$
is stochastically larger than $(\xi_{t})_{t\geq0}$.
\end{lemma}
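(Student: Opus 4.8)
The plan is to prove the stochastic ordering by applying Theorem \ref{cns}, or rather its one-particle specialization in Remark \ref{remM1}, since in both models births, deaths and migrations move at most one particle at a time. Writing $\xi$ for the (smaller) Model I process with rates (\ref{rates1}) and $\eta$ for the (larger) Model IV process with rates (\ref{rates4}), I want to verify Conditions (\ref{C+1})--(\ref{C-10}) with the tilded rates being those of $\xi$ and the untilded rates those of $\eta$.

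First I would record the structural fact that makes everything work: the set $\{0,1,\dots,N\}^{\mathbb{Z}^d}$ is invariant for Model I. Indeed, by (\ref{rates1}) a birth at a site $x$ is suppressed once $\xi_t(x)=N$, and an immigration into $x$ can occur only when $\xi_t(x)<N$; hence no site can overshoot $N$, and starting from $\xi_0$ with $\xi_0(x)\le N$ for all $x$ we get $\xi_t(x)\le N$ for all $x$ and all $t$ almost surely. Consequently the coupling only ever meets configurations in which the smaller process takes values $\le N$, so it suffices to verify (\ref{C+1})--(\ref{C-10}) for $\alpha\le N$ and $\beta\le N$ (with $\gamma\ge\alpha$ and $\delta\ge\beta$ arbitrary in $\N$). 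I would phrase this through Remark \ref{non_inv}, checking the inequalities for each pair of sites $(x,y)$ and each ordered pair of reachable configurations.

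Next I would carry out the four checks. The birth conditions (\ref{C+1}) and (\ref{C+10}) compare the up-rate at the common site $y$ (where $\beta=\delta$): the bound $\beta\,\I_{\{\beta<N\}}\le\beta$ handles the on-site birth, because Model IV has no cap, while $\I_{\{\alpha=N,\,\beta<N\}}\le\I_{\{\gamma\ge N,\,\beta<N\}}$ (using $\gamma\ge\alpha$) handles the immigration term, so (\ref{C+}) holds. For the death conditions (\ref{C-1}) and (\ref{C-10}) one has $\gamma=\alpha\le N$, so the Model IV death rate $\gamma\bigl(\phi\,\I_{\{\gamma\le N\}}+\Dphi\,\I_{\{N<\gamma\}}\bigr)$ collapses to $\phi\alpha$ and matches the Model I death rate $\phi\alpha$; the migration contribution then reduces to $\I_{\{\alpha=N,\,\beta<N\}}\ge\I_{\{\alpha\ge N,\,\delta<N\}}$, which follows from $\delta\ge\beta$ (if $\delta<N$ then $\beta<N$, and $\alpha\le N$ turns $\{\alpha\ge N\}$ into $\{\alpha=N\}$). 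Hence (\ref{C-}) holds as well and Theorem \ref{cns} gives that $\eta$ is stochastically larger than $\xi$.

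The only real obstacle—and the reason the hypothesis $\xi_0(x)\le N$ is indispensable—is the jump of the Model IV death rate from $\phi$ to $\Dphi>\phi$ above level $N$. Were the smaller process allowed to reach a value $\alpha>N$, Condition (\ref{C-1}) would require $\phi\alpha$ to dominate $\Dphi\,\alpha$ plus a nonnegative migration term, which fails since $\Dphi>\phi$: the larger process would lose mass faster than the smaller one and no order-preserving coupling could exist. The invariance of $\{0,\dots,N\}$ under Model I is precisely what confines the verification to $\alpha\le N$ and removes this obstruction. If one prefers to invoke Theorem \ref{cns} verbatim, with its conditions quantified over all of $X$, one may instead redefine the Model I death rate at levels above $N$ to be at least $\Dphi\,\alpha+\lambda$; since those states are never visited this leaves the law of $\xi$ unchanged while making the conditions hold globally.
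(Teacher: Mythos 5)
Your proposal is correct and follows essentially the same route as the paper: both note that Model I confined to $\{0,\dots,N\}$ is invariant (births stop at $N$, immigration requires the target below $N$), and then verify the one-particle conditions (\ref{C+1})--(\ref{C-10}) of Remark \ref{remM1}, with identical inequalities for the birth, death and migration terms. Your closing observation---that one can redefine the Model I death rate above level $N$ to make the conditions hold over all of $X$, since those states are never visited---makes explicit a point the paper handles only implicitly when it restricts the death-rate check to $\xi(x)=\eta(x)\le N$, but this is a refinement of the same argument, not a different one.
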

\begin{pf}
Let $(\widetilde{P}^\cdot_{\cdot,\cdot}, \widetilde{\Gamma}^\cdot_{\cdot
,\cdot})$ and $(P^\cdot_{\cdot,\cdot}, \Gamma^\cdot_{\cdot,\cdot})$ be
respectively the transition rates of $(\xi_{t})_{t\geq0}$ and $(\eta
_{t})_{t\geq0}$.
Note that an increase of particles in a site $x$ with $\xi_t(x)=N$ is
not possible; therefore $\xi_t(x)\leq N$ for each $x\in\mathbb{Z}^d$
and $t\geq0$.

We check conditions in Remark \ref{remM1}. Given $\xi(x)\leq\eta(x)$,
$\xi(y)=\eta(y)$,
\begin{eqnarray*}
\widetilde{P}^1_{\xi(y)}+\widetilde{\Gamma}^1_{\xi(x),\xi(y)}&=&\xi(y)\I
_{\{\xi(y)\leq N-1\}}+\lambda\I_{\{\xi(x)=N, \xi(y)<N\}}\\
&\leq&\eta
(y)+\lambda\I_{\{\eta(x)\geq N,\eta(y)<N\}}\\
&=& {P}^1_{\eta(y)}+\Gamma^1_{\eta(x),\eta(y)},\\
\widetilde{P}^1_{\xi(y)}&=&\xi(y)\I_{\{\xi(y)\leq N-1\}}\leq\eta(y)=
{P}^1_{\eta(y)},
\end{eqnarray*}
and conditions (\ref{C+1})--(\ref{C+10}) are satisfied.

If $\xi(x)=\eta(x)$ [which is possible only if $\eta(x)\leq N$], $\xi
(y)\leq\eta(y)$,
\begin{eqnarray*}
\widetilde{P}^{-1}_{\xi(x)}+\widetilde{\Gamma}^1_{\xi(x),\xi(y)}&=&\phi
\xi(x)+\lambda\I_{\{\xi(x)=N,\xi(y)<N\}}\geq\phi\eta(x)+\lambda\I
_{\{\eta(x)\geq N,\eta(y)<N\}}\\
&=& {P}^{-1}_{\eta(x)}+\Gamma^1_{\eta(x),\eta(y)},\\
\widetilde{P}^{-1}_{\xi(x)}&=&\phi\xi(x)\geq\phi\eta(x)=
{P}^{-1}_{\eta(x)},
\end{eqnarray*}
so that conditions (\ref{C-1})--(\ref{C-10}) hold.
\end{pf}

Therefore by Theorem \ref{phasetransition1} there exists $\phi_c(\lambda
,N)$ such that if $\phi<\phi_c(\lambda,N)$ there is a positive
probability of survival for Model I, and hence for Model IV. By taking
$\phi>1$ one proves as in Model I [Step (ii) in proof of Theorem~\ref
{phasetransition1}], that the process dies out: the existence of the
critical parameter $\phi_c$ follows from monotonicity with respect to
$\phi$.

(ii) We skip this step, since as in Step (i), stochastic
order and Theorem~\ref{phasetransition3} induce survival of the
process.

We prove that even if the process survives, the expected value on each
site is finite. Let $\eta^N_0(x)\geq N$ for each $x \in\mathbb{Z}^d$,
and let $(\eta^{N}_{t})_{t\geq0}$ be a process with $N$ immortal
particles per site, that is, with transition rates
\begin{eqnarray*}
\eta_t^N(x)&\to& \eta_t^N(x)+1 \qquad\mbox{at rate }\eta_t^N(x),\\
\eta_t^N(x)&\to& \eta^N(x)-1 \qquad\mbox{at rate }\widetilde{\phi}\eta
_t^N(x)\I_{\{\eta_t^N(x)>N\}}.
\end{eqnarray*}
We define $\zeta_t(x):=\eta^N_{t}(x)-N$ for each $x \in\mathbb{Z}^d$,
the birth and death process on $\mathbb{N}$ with birth rate $N+\zeta
_{t}(x)$ and death rate $\widetilde{\phi}(N+\zeta_{t}(x))\I_{\{\zeta
_{t}(x)>0\}}$. Thus
\[
\frac{d}{dt}\mathbb{E}(\zeta_{t}(x))= \mathbb{E}\bigl(\zeta
_{t}(x)+N\bigr)-\widetilde{\phi}\mathbb{E}\bigl(\zeta_{t}(x)+N\bigr)\I_{\{\zeta
_t(x)>0\}}\leq N-(\widetilde{\phi}-1)\mathbb{E}(\zeta_{t}(x))
\]
which implies
\[
\mathbb{E}(\zeta_{t}(x))\leq\mathbb{E}(\zeta_{0}(x))+N/(\widetilde{\phi}-1).
\]
Therefore if $\zeta_{0}(x)\leq n$, there exists $c=c(n,N,\widetilde{\phi
})$ such that $\mathbb{E}(\zeta_{t}(x))\leq c$ for each $t\geq0$ and
$x$. The claim follows by taking $C=c+N$.

\section*{Acknowledgments} I am grateful to Ellen Saada, the
French supervisor of my Ph.D. thesis, which was done in joint tutorage
between the LMRS Universit\'e de Rouen and the Universit\`a di Milano
Bicocca. I thank Rinaldo Schinazi and two anonymous referees for very
useful suggestions which helped me to improve the work. I thank
Institut Henri Poincar\'e, Centre Emile Borel for hospitality during
the semester ``Interacting Particle Systems, Statistical Mechanics and
Probability Theory,'' where part of this work was done and Fondation
Sciences Math\'ematiques de Paris for financial support during the
stay. I acknowledge Laboratoire MAP5, Universit\'e Paris Descartes for
hospitality.

% imsref loaded by akundreckaite, 2011-07-19 16:03:06

%suskaldyti doi

\printaddresses

\end{document}